\DeclarePairedDelimiter{\ceil}{\lceil}{\rceil}
\newtheorem{theorem}{Theorem}
\theoremstyle{definition}
\newtheorem{definition}[theorem]{Definition}
\theoremstyle{corollary}
\newtheorem{corollary}[theorem]{Corollary}
\theoremstyle{conjecture}
\theoremstyle{remark}
\theoremstyle{proposition}
\newtheorem{proposition}[theorem]{Proposition}
\begin{document}
\parskip10pt
\parindent15pt
\baselineskip15pt    

\title{Minimally Connected Hypergraphs}

\author[M. Budden]{Mark Budden}
\address{Department of Mathematics and Computer Science \\
Western Carolina University \\
Cullowhee, NC 28723 USA}
\email{mrbudden@email.wcu.edu}

\author[J. Hiller]{Josh Hiller}
\address{Department of Mathematics and Computer Science \\ 
Adelphi University \\
Garden City, NY 11530-0701}
\email{johiller@adelphi.edu}

\author[A. Penland]{Andrew Penland}
\address{Department of Mathematics and Computer Science \\
Western Carolina University \\
Cullowhee, NC 28723 USA}
\email{adpenland@email.wcu.edu}

\subjclass[2010]{Primary  05C65, 05C55; Secondary 05D10, 05C05}
\keywords{hypertrees, hypergraph colorings, spanning subhypergraph}

\begin{abstract}
Graphs and hypergraphs are foundational structures in discrete mathematics.  They have many practical applications, including the rapidly developing field of bioinformatics, and more generally, biomathematics. They are also a source of interesting algorithmic problems. In this paper, we define a \textit{construction process} for minimally connected $r$-uniform hypergraphs, which captures the intuitive notion of building a hypergraph piece-by-piece, and a numerical invariant called the \textit{tightness}, which is independent of the construction process used. Using these tools, we prove some fundamental properties of minimally connected hypergraphs. We also give bounds on their chromatic numbers and provide some results involving edge colorings.  We show that every connected $r$-uniform hypergraph contains a minimally connected spanning subhypergraph and provide a polynomial-time algorithm for identifying such a subhypergraph. \end{abstract}

\maketitle


\section{Introduction}\label{intro}

Graphs and hypergraphs provide many beautiful results in discrete mathematics. They are also extremely useful in applications. Over the last six decades, graphs and their generalizations have been used for modeling many biological phenomena, ranging in scale from protein-protein interactions, individualized cancer treatments, carcinogenesis, and even complex interspecial relationships \cite{GWVP, HVBK, KHT, LBAA, WJ}. In the last few years in particular, hypergraphs have found an increasingly prominent position in the biomathematical literature, as they allow scientists and practitioners to model complex interactions between arbitrarily many actors \cite{WJ}.  

The flexibility that makes hypergraphs such a versatile tool complicates their analysis.  Because of this, many different algorithms and metrics have been developed to assist with their application  \cite{LBBGE, ZCYK}. Due to the sheer number of researchers from varying disciplines developing these techniques, there is substantial inconsistency in the literature regarding names and notations. However, many of these approaches share a common theme: they aim to quantify or model connectivity in some way \cite{LBBGE}. 

Trees, and in particular, spanning trees, offer a very useful tool for studying connectivity in graphs. Indeed, trees play an all-important role in combinatorics: they are simple enough to provide intuition via examples, yet  sufficiently complex to provide richness and depth. Trees also illustrate some differences between graphs and hypergraphs. For instance, it is well-known that every connected graph has a spanning tree. It is also well-established that this statement is not true for hypergraphs.


The distinction between graphs and other hypergraphs with regard to spanning trees has important considerations in theoretical computer science. If one needs a spanning tree in a graph, standard algorithms such as that of Prim~\cite{P} or Kruskal~\cite{K}  will do the job in low-degree polynomial time. For $3$-uniform hypergraphs, an algorithm due to Lov\'asz~\cite{LL} will also determine the existence of a spanning tree in polynomial time. A subsequent, more efficient polynomial time algorithm for the same problem is due to Gabow and Stallman~\cite{GS}. However, Andersen and Fleischner~\cite{AF} showed that the general problem of determining whether or not a hypergraph has a spanning tree is NP-complete, even for relatively restricted classes, such as linear hypergraphs in which each vertex is contained in at most 3 hyperedges, or 4-uniform hypergraphs which have some vertex in common to all hyperedges. Andersen and Fleischner \cite{AF} quote this last fact as an unpublished result of Carsten Thomassen. 

Another important distinction between graphs and hypergraphs arises in the equivalence of certain definitions of a spanning tree. In graph theory, every spanning minimally connected subgraph is a tree, but the analagous statement does not hold for hypergraphs. This distinction between spanning trees and minimally connected subhypergraphs appears when generalizing certain results from graphs to hypergraphs.  For example, in 2014, Chartrand, Johns, McKeon, and Zhang \cite{CJMZ} proved that a connected graph $G$ has its rainbow connection number equal to its size if and only if $G$ is a tree.  In order to prove an analogue of this result in the setting of hypergraphs, Carpentier, Liu, Silva, and Sousa \cite{CLSS} were forced to consider the more general class of minimally connected hypergraphs.  In this paper, we examine other aspects of hypergraph theory where minimally connected hypergraphs are necessary to prove results that typically concern trees in graphs.  


In the next section, we give the formal definition of an $r$-uniform tree and provide a very simple demonstration that for every $r>2$, there exist connected hypergraphs which do not admit spanning $r$-uniform trees. In Section 3, we consider  structural properties and existence theorems for minimally connected hypergraphs, introducing a numerical invariant called \textit{tightness} associated to a hypergraph. In Section 4, we examine chromatic numbers of minimally connected hypergraphs. Section 5 of this paper deals with other connectivity issues related to minimally connected hypergraphs. We conclude with some open questions and  directions for future research. 

\section{Definitions and Background}\label{defback}

In this section, we provide the definitions, elementary examples, and concepts that are necessary to derive the results of this paper. We begin by formally defining hypergraphs and $r$-uniform hypergraphs.  As is customary, we will denote the cardinality of a set $S$ by $|S|.$ If $S$ is a set, we write $2^S$ for the power set of $S$. 

A hypergraph $H$ consists of two sets: a non-empty set $V(H)$ called the \textit{vertex set}, and a set $E(H) \subseteq 2^V - \emptyset$, called the set of \textit{hyperedges}. When the hypergraph being considered is clear from the context, we may write $V$ and $E$ in place of $V(H)$ and $E(H)$, respectively.  The \textit{size} of a hypergraph is  $|E|$ while the order of a hypergraph is  $|V|$. An $r$-uniform hypergraph is a hypergraph where for all $e\in E$, $|e|=r$. When $r=2$, our definition coincides with that of a graph. For every hypergraph $H$, there is a corresponding hypergraph $\overline{H}$ such that $V(H)=V(\overline{H})$ and $E(\overline{H})=2^V-(E(H)\cup \{\emptyset \})$. If $H$ is assumed to be an $r$-uniform hypergraph, then we assume $\overline{H}$ is also an $r$-uniform hypergraph, and so we only look at the edge complement of $H$ within the more restricted set of vertex sets of cardinality $r$.   

Throughout the remainder of this paper, we will focus on $r$-uniform hypergraphs with $r>2$. We call a hypergraph $H$ {\it finite} if $V(H)$ is finite. For the remainder of this paper, we will only consider finite hypergraphs. For clarity, when we refer to graphs, we will call them $2$-graphs.  

A {\it Berge path} consists of a sequence of $k$ distinct vertices $v_1$, $v_2$, \dots , $v_k$ and $k-1$ distinct hyperedges $e_1$, $e_2$, \dots , $e_{k-1}$ such that $v_i, v_{i+1}\in e_i$ for all $i\in \{1, 2, \dots , k-1\}$.  A {\it Berge cycle} is formed if there is a hyperedge $e_k$ that includes both $v_1$ and $v_k$.  A Berge path is a {\it loose path} if for $i\not=j$, $$|e_i\cap e_{j}|=\left\{ \begin{array}{ll} 0 & \mbox{if $j\ne i+1$} \\ 1 & \mbox{if $j=i+1$}.\end{array}\right.$$ Observe that all vertices in a loose path are necessarily distinct.


We say that an $r$-uniform hypergraph $H$ is {\it minimally connected} if the removal of any hyperedge (while retaining all vertices) disconnects $H$.   For example, consider the hypergraphs in Figure \ref{minimal}. Every hyperedge in the first hypergraph contains some vertex of degree one, but this is not the case in the second hypergraph.
\begin{figure}[H]
\centerline{
{\includegraphics[width=0.53\textwidth]{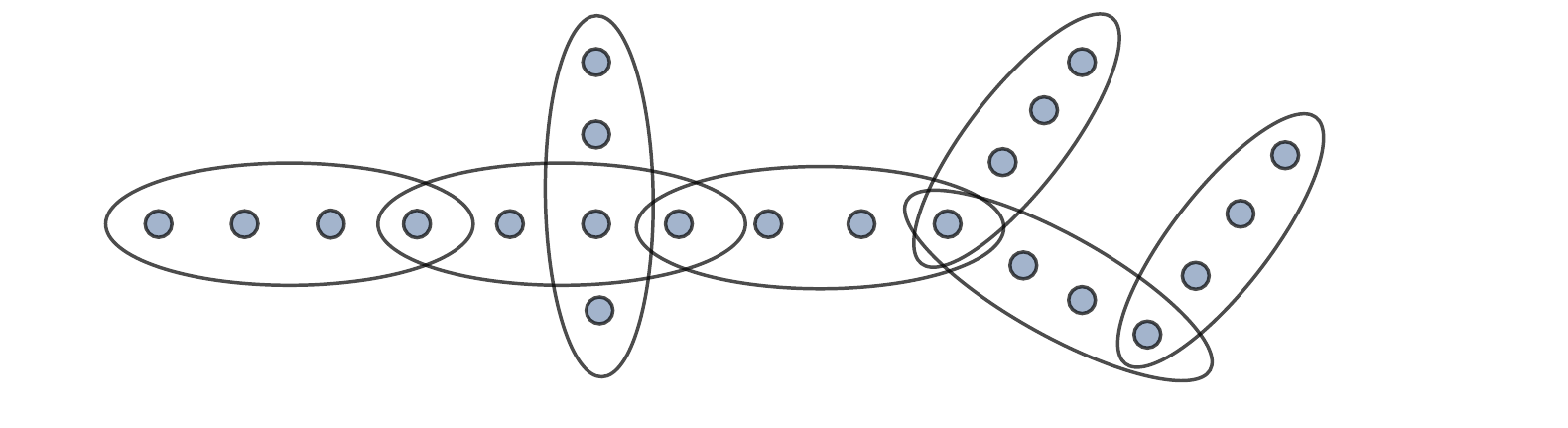}}{\includegraphics[width=0.53\textwidth]{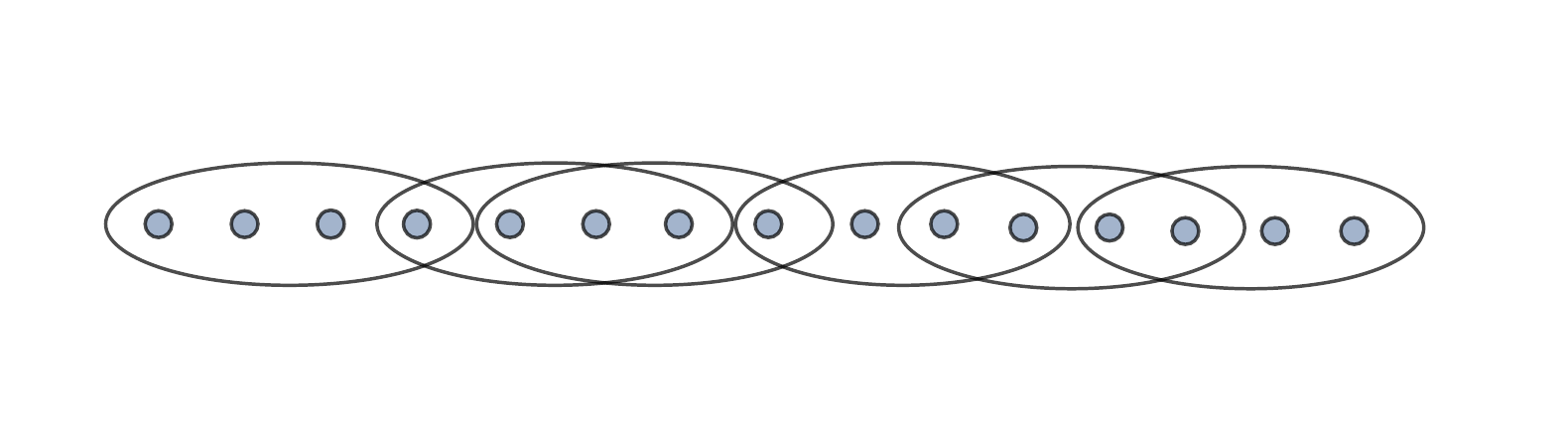}}}\caption{Two minimally connected $4$-uniform hypergraphs.  } \label{minimal}
\end{figure} 
\noindent Observe that if every hyperedge of a connected hypergraph contains at least one vertex of degree $1$, then it is minimally connected.  
The second hypergraph in Figure \ref{minimal} demonstrates that the converse to this statement is false. Figure \ref{minimal2} shows two other minimally connected hypergraphs, both of which are Berge cycles.  
\begin{figure}[h!]
\centerline{
{\includegraphics[width=0.4\textwidth]{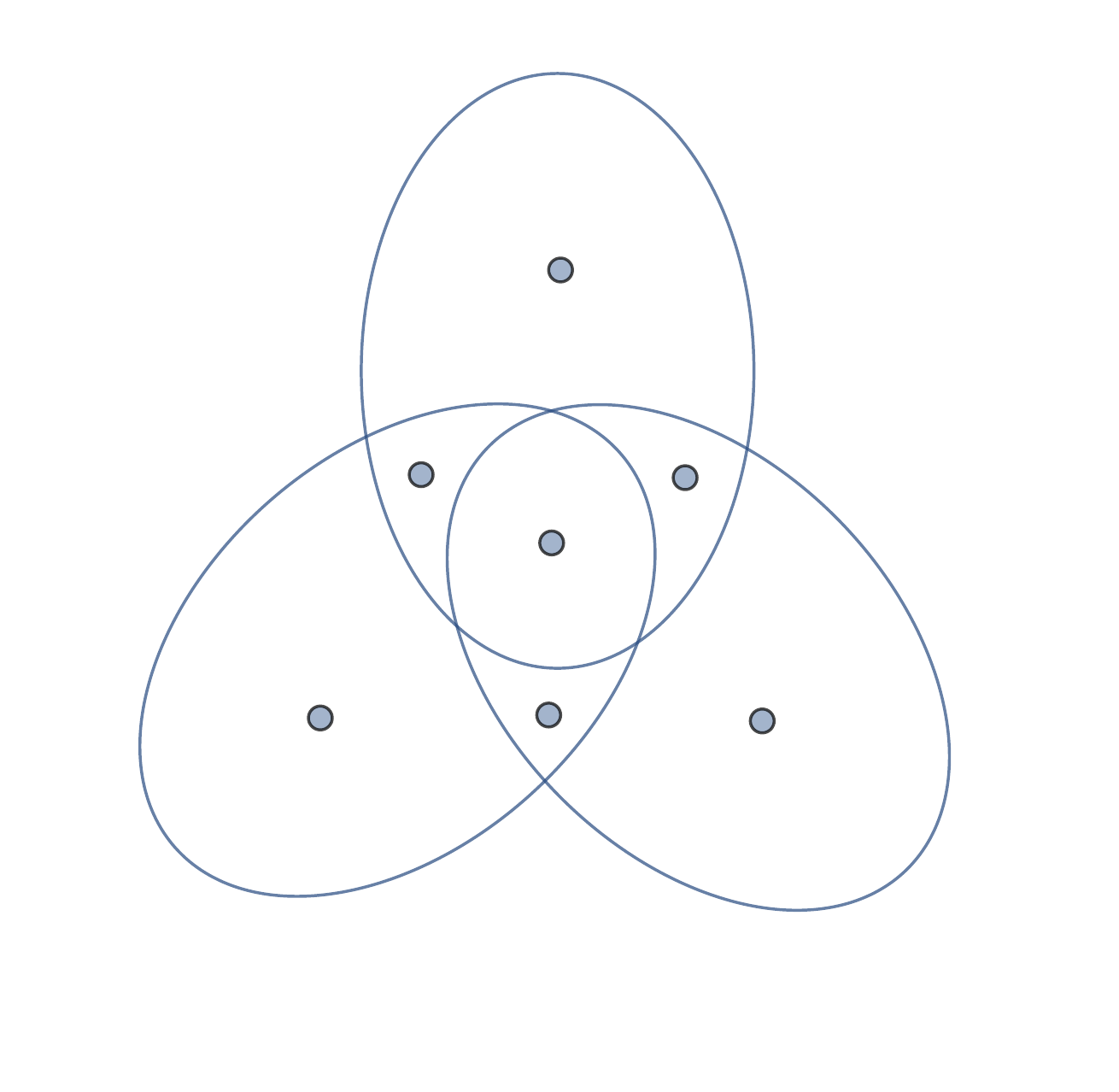}}{\includegraphics[width=0.4\textwidth]{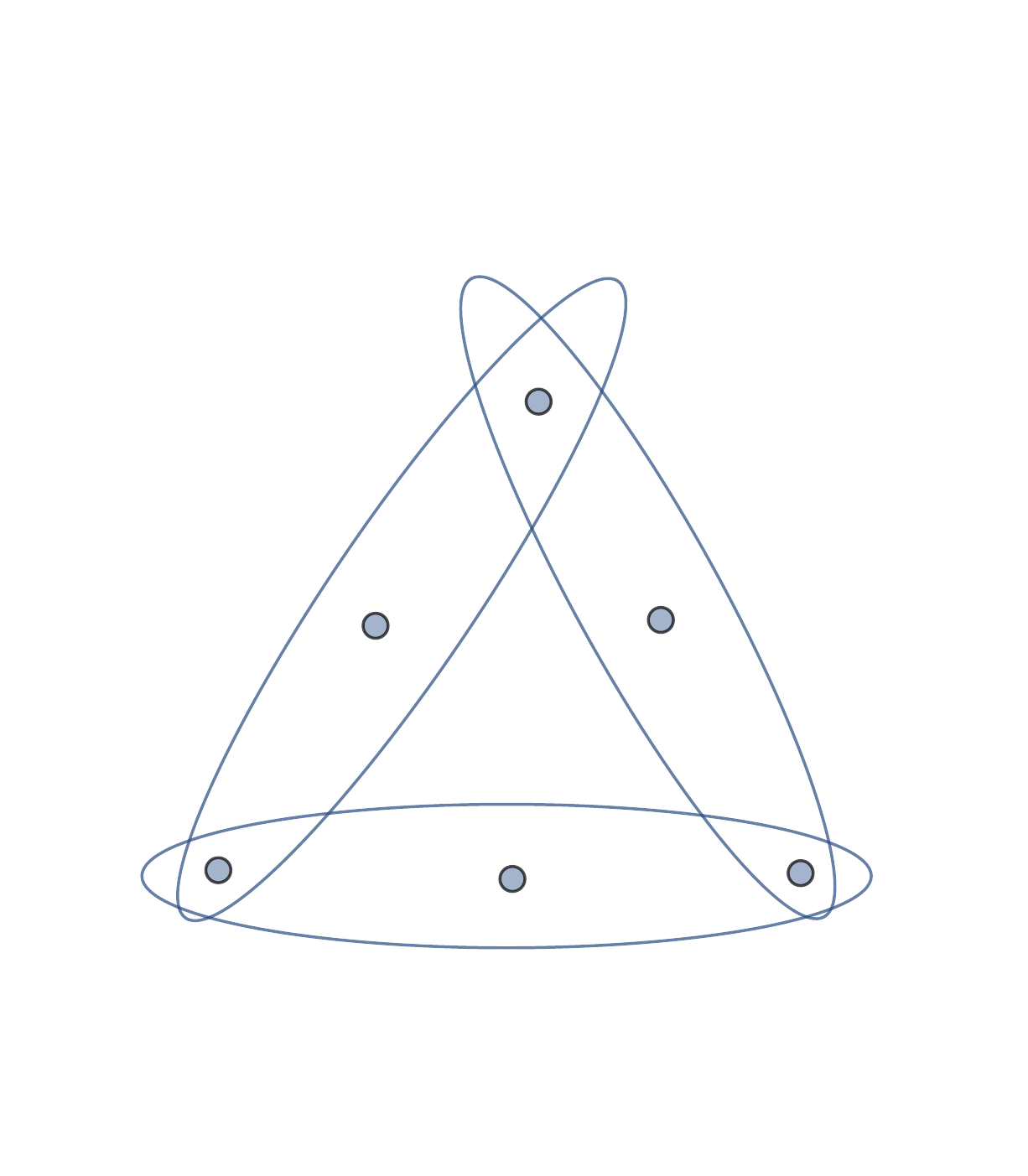}}}\caption{Two minimally connected hypergraphs in which any hyperedge-by-hyperedge construction requires the third hyperedge to intersect at least two previous hyperedges.}\label{minimal2}
\end{figure} 

Of course, having defined minimally connected hypergraphs, one must inquire about the appropriate definition of a hypergraph tree. We now give several possible definitions.

\begin{definition}\label{tree} The following definitions of $r$-uniform trees are equivalent:
\begin{enumerate}
\item $T$ is an $r$-uniform hypergraph that can be formed hyperedge-by-hyperedge with each new hyperedge intersecting the previous hypergraph at exactly one vertex.  That is, each new hyperedge requires the creation of exactly $r-1$ new vertices.
\item $T$ is a connected $r$-uniform hypergraph that does not contain any (Berge) cycles.
\item $T$ is a connected $r$-uniform hypergraph in which the removal of any hyperedge (keeping all vertices) results in a hypergraph with exactly $r$ connected components.
\item $T$ is an $r$-uniform hypergraph in which there exists a unique loose path between any pair of distinct vertices.
\item $T$ is a connected $r$-uniform hypergraph in which the size $|E|$ and order $|V|$ satisfy $|V|=(r-1)|E|+1$.
\end{enumerate}
\end{definition}

The equivalence of $(1)-(4)$ can be found in Theorem 2.1 of \cite{BP}. We will wait until Corollary \ref{treedef} in Section 3 to complete the proof that $(5)$ is also a suitable definition.
An important observation is that  from $(3)$, it immediately follows that every $r$-uniform tree is minimally connected (in that the removal of any hyperedge disconnects the hypergraph).  Of course, the hypergraphs given in Figures 1 and 2 show that not all minimally connected hypergraphs are trees. 

The examples given so far demonstrate that a hyperedge-by-hyperedge construction must allow for the intersection of a new hyperedge with more than one vertex (and even more than one hyperedge) in the previous hypergraph.  The examples in Figure \ref{minimal2} also demonstrates why it is necessary to allow for (Berge) cycles.  Of course, if cycles are allowed, then there can be multiple paths between a pair of distinct vertices. In fact, the following proposition shows that it is quite easy to construct such examples for every $r>2$. Let us first define an \textit{ $r$-uniform hypergraph spanning tree} for a an $r$-uniform hypergraph $H$ to be an $r$-uniform tree $T$ with $V(T)=V(H)$ and $E(T)\subseteq E(H).$

\begin{proposition}
For every $r>2$ there exists a connected $r$-uniform hypergraph which does not admit an $r$-uniform spanning tree.
\end{proposition}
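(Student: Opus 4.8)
The plan is to exhibit, for each $r>2$, a very small connected $r$-uniform hypergraph whose vertex count already obstructs a spanning tree. I would take two distinct $r$-element sets $e_1,e_2$ with $|e_1\cap e_2|=2$ (possible since $r>2$), and let $H$ be the hypergraph with $V(H)=e_1\cup e_2$ and $E(H)=\{e_1,e_2\}$. Then $|V(H)|=2r-2$, and $H$ is connected since $e_1$ and $e_2$ overlap.

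Next I would argue that $H$ admits no $r$-uniform spanning tree. Suppose $T$ were one. By definition $E(T)\subseteq E(H)=\{e_1,e_2\}$ and $V(T)=V(H)$. A single hyperedge spans only $r$ vertices, and $r<2r-2$ because $r>2$, while a hypergraph on $\ge 2$ vertices with no hyperedge is disconnected; hence $E(T)=\{e_1,e_2\}$, i.e.\ $T=H$.

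Finally I would rule out $H$ itself being an $r$-uniform tree. Using the hyperedge-by-hyperedge characterization, Definition \ref{tree}(1), a two-hyperedge $r$-uniform tree is obtained by placing one hyperedge and attaching the second at exactly one vertex, so it has exactly $r+(r-1)=2r-1$ vertices, whereas $|V(H)|=2r-2$; equivalently, the attachment step would force $|e_1\cap e_2|=1$, contradicting the choice of $e_1,e_2$. Hence $H$ is not a tree, so it has no $r$-uniform spanning tree, completing the proof.

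There is no deep obstacle here: the content lies entirely in picking the example and applying the definitions correctly. The one point that needs care is that we may not yet invoke the cardinality criterion Definition \ref{tree}(5), since its equivalence is established only in Corollary \ref{treedef}; so the last step should be phrased via the construction-process definition (1) (or via acyclicity, Definition \ref{tree}(2)), which is fully rigorous and sidesteps any debate about whether two hyperedges meeting in two vertices form a Berge $2$-cycle. If a slightly less degenerate example is preferred, three hyperedges forming a loose cycle as in Figure \ref{minimal2}, on $3r-3$ vertices, works by the same counting, but the two-hyperedge construction is the most economical.
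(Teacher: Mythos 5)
Your proof is correct and follows essentially the same route as the paper: a two-hyperedge connected example whose only connected spanning subhypergraph is itself, which fails to be a tree because the hyperedges meet in more than one vertex (the paper takes $|e_1\cap e_2|=r-1$ on $r+1$ vertices, you take intersection $2$ on $2r-2$ vertices). Your extra care in invoking Definition \ref{tree}(1) rather than the counting criterion (5) is a sound refinement of the same argument.
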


\begin{proof}
Fix $r>2$ and consider the $r$-uniform hypergraph on $r+1$ vertices with two edges. $e_1$ consists of vertices $(v_1,v_2,...v_r)$ and $e_2$ consists of vertices $(v_2,v_3,...,v_{r+1}).$ Then the hypergraph obtained is connected and not a tree but the removal of either edge disconnects the hypergraph. 
\end{proof}


%

\section{Structural Properties and Existence Theorems}\label{prop}

In this section, we analyze the underlying structure of minimally connected hypergraphs.  We start by defining a construction process, which leads to the notion of the tightness sum of such a hypergraph.  Our attention then turns to possible sizes and an algorithm for finding a spanning minimally connected subhypergraph for any connected $r$-uniform hypergraph.

\subsection{Construction Processes for Hypergraphs}

The definition we offered for a minimally connected hypergraph can be a little fastidious  to work with when proving basic properties of  hypergraphs. Hence, we wish to offer an equivalent constructive definition of a minimally connected $r$-uniform hypergraph that will serve our purposes nicely. In order to make this notion precise, we formally define a \textit{constructive process}. 

\begin{definition}
Let $H$ be an $r$-uniform hypergraph of size $n$. A \textit{constructive process} $\mathcal{P}$ for $H$ is a finite sequence of hypergraphs $H_i$, $1 \leq i \leq n$ satisfying the following properties: \\
\begin{itemize}

\item  $H_n = H$,
\item $H_j$ is a subhypergraph of $H_{k}$ for $j \leq k$, 
\item each $H_i$ has exactly $i$ hyperedges, and 
\item for each $j$, $E(H_{j+1}) = E(H_j) \bigcup \{ e_j \} $ for some $e_j \in E(H)$.
\end{itemize}

We say that a constructive process $\mathcal{P}$ for a hypergraph $H$ is {\it connected} if and only if each $H_i$ is connected for all $i$.  It is clear that a hypergraph is connected if and only if it has a connected construction process.  We say that a constructive process $\mathcal{P}$ for a hypergraph $H$ is {\it minimally connected} if and only if each $H_i$ is minimally connected. 

\end{definition}


\begin{theorem}  
An $r$-uniform hypergraph $H$ is minimally connected if and only if 
$H$ is an $r$-uniform hypergraph that has a minimally connected constructive process. 
\label{equiv}
\end{theorem}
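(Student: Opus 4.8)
The plan is to prove the two directions separately, with the harder direction being the construction of a minimally connected constructive process from a minimally connected hypergraph.

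First I would handle the easy direction: if $H$ admits a minimally connected constructive process $\mathcal{P} = (H_1, \dots, H_n)$, then $H = H_n$ is minimally connected by the last condition in the definition of such a process. So nothing to do here beyond unwinding definitions.

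For the converse, suppose $H$ is minimally connected with $n$ hyperedges. The natural approach is induction on $n$. The base case $n = 1$ is immediate. For the inductive step, the key claim is that a minimally connected $H$ with $n \geq 2$ hyperedges always has a hyperedge $e$ whose removal leaves a hypergraph $H' := H - e$ (on the same vertex set) that is still minimally connected; then applying the inductive hypothesis to $H'$ and appending $e$ yields the desired process $(H_1, \dots, H_{n-1}, H)$ — provided $H' $ is still connected, which is where the real subtlety lies. Since $H$ is minimally connected, removing \emph{any} hyperedge disconnects it, so $H'$ itself is \emph{not} connected! This means the naive ``peel off one edge'' induction cannot work directly, and I would instead need to peel off a hyperedge together with the vertices it introduced, i.e. think of the inductive step as removing a hyperedge $e$ and all vertices of degree one lying in $e$, so that the smaller hypergraph $H'$ on the reduced vertex set is again minimally connected. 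This is the standard way ``construction processes'' are meant to work (cf. the tree definition (1) in Definition \ref{tree}), but it requires care because the definition of constructive process as written keeps the vertex set fixed. I would therefore look for a hyperedge $e$ of $H$ such that $e$ contains at least one vertex of degree one; removing $e$ (keeping all vertices) then leaves those vertices isolated but leaves the rest minimally connected, and within the paper's framework one re-adds $e$ at the last step. So the technical heart is: \textbf{every minimally connected $r$-uniform hypergraph with at least two hyperedges contains a hyperedge with at least one degree-one vertex, and its removal (restricted appropriately) preserves minimal connectivity.}

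The main obstacle, then, is proving that such a ``removable'' hyperedge always exists and that its removal preserves minimal connectedness of the remaining structure. For existence I would argue by contradiction: if every hyperedge of $H$ has all its vertices of degree $\geq 2$, I would try to build a Berge cycle or otherwise contradict minimality — roughly, starting from any hyperedge and repeatedly using the degree-$\geq 2$ condition to extend, in a finite hypergraph one is forced to close up, producing a sub-structure whose hyperedges are each non-essential for connectivity, contradicting that $H$ is minimally connected. For the preservation step, I would verify that if $e$ has a degree-one vertex $v$ and $H$ is minimally connected, then $H$ with $e$ deleted and $v$ (and any other degree-one vertices of $e$) deleted is connected, and that removing any other hyperedge from it still disconnects it — the latter because any hyperedge that was essential in $H$ remains essential after deleting $e$ and the pendant vertices, as those deletions do not create new connections. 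Once this lemma is in place, the induction closes and the constructive process is assembled by reversing the peeling order.
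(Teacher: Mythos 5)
Your forward direction is fine, but the inductive peeling step in the converse has a genuine gap. You propose to find a hyperedge $e$ containing a degree-one vertex and claim that deleting $e$ (together with its pendant vertices) ``leaves the rest minimally connected.'' That claim is false for an arbitrary such $e$: in the $3$-uniform loose path with hyperedges $e_1=\{a,b,c\}$, $e_2=\{c,d,e\}$, $e_3=\{e,f,g\}$, the hypergraph is minimally connected and $e_2$ contains the degree-one vertex $d$, yet deleting $e_2$ and $d$ leaves the two components $\{a,b,c\}$ and $\{e,f,g\}$, so the remaining structure is not even connected and the induction cannot be applied to it. What you actually need is a hyperedge whose removal isolates only degree-one vertices (equivalently, a hyperedge that can serve as the \emph{last} edge of some connected edge ordering), and your sketch provides no way to locate one; moreover, your existence argument (``all vertices of degree $\geq 2$ forces a Berge cycle, contradicting minimality'') is only a heuristic as written, not a proof.

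The paper's proof sidesteps all of this and shows why no careful choice is needed: since a minimally connected $H$ is in particular connected, it has \emph{some} connected constructive process $H_1,\dots,H_n$, and every intermediate stage of such a process is automatically minimally connected --- if removing a hyperedge left some $H_i$ connected, then it would also leave $H$ connected, because each later hyperedge intersects the vertex set of the stage before it, so connectivity propagates up to $H$. This contradicts minimal connectivity of $H$. That single observation replaces your entire inductive step; it also repairs your approach if you insist on peeling, since it shows the last edge of any connected ordering is a legitimate edge to remove. As it stands, though, your argument is incomplete at its technical heart.
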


\begin{proof}
First, suppose that $H$ a is minimally connected $r$-uniform hypergraph.  In particular, $H$ is connected, so it can be constructed hyperedge-by-hyperedge with each resulting hypergraph being connected along the way.  Let $H_i$ be the resulting hypergraph after the first $i$ hyperedges $e_1, e_2, \dots , e_i$ have been added.  Note that if $H_i$ is not minimally connected, then there exists some hyperedge whose removal does not disconnect $H_i$, and hence, would not disconnect $H$.  Conversely, if a hypergraph $H$ can be formed hyperedge-by-hyperedge with the resulting hypergraph being minimally connected at each stage, then the final stage results in $H$, which is necessarily minimally connected.
\end{proof}

We have shown that there is a constructive process $\mathcal{P}$ that one can use to obtain a minimally connected hypergraph.  Suppose that $H$ is a minimally connected $r$-uniform hypergraph of size $k$.  Denote the hyperedges in this constructive process by $e_1$, $e_2,$ \dots , $e_k$ and let $H_i$ be the resulting minimally connected hypergraph after hyperedge $e_i$ has been added.  Note that with the addition of each hyperedge, at least one new vertex must be introduced.  Otherwise, when adding in a hyperedge $e_i$ that only uses existing vertices, the hypergraph $H_{i-1}$ would have to have been disconnected, contradicting our assumption about the constructive process.  It is also worth observing that the addition of any $e_i$ cannot prevent the removal of a previous hyperedge $e_j$ ($j<1$) from disconnecting the hypergraph (although it may change the number of resulting components).

\subsection{The Notion of Tightness And Some Applications}
A constructive process $\mathcal{P}$ produces a sequence $t_1$, $t_2$, \dots , $t_{k-1}$ of ``tightnesses'' given by $$t_i=| V(H_{i})\cap e_{i+1}|,$$ where $1\le t_i \le r-1$.  Although the constructive process that we have described is not unique for a given minimally connected hypergraph $H$, we will show in the following theorem that the sum of the tightnesses is independent of the construction chosen.

\begin{theorem}
All constructive processes for a given $r$-uniform hypergraph $H$ of size $k$ produce the same tightness sum $$t_{H}:=\mathop{\sum}\limits_{i=1}^{k-1} t_i.$$
\end{theorem}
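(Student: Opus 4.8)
The key observation is that the tightness sum measures, in a precise way, how many "extra" vertices fail to be created as we build the hypergraph. Fix a constructive process $\mathcal{P} = (H_1, \dots, H_k)$ with hyperedges $e_1, \dots, e_k$ and tightnesses $t_1, \dots, t_{k-1}$. The plan is to compute $|V(H)|$ in terms of the $t_i$'s and thereby show the tightness sum is determined by $H$ alone. When we add $e_{i+1}$ to $H_i$, the edge $e_{i+1}$ has $r$ vertices, of which exactly $t_i = |V(H_i) \cap e_{i+1}|$ already appear in $H_i$; hence exactly $r - t_i$ new vertices are introduced. The first edge $e_1$ contributes all $r$ of its vertices. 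Summing,
\begin{equation*}
|V(H)| = r + \sum_{i=1}^{k-1}(r - t_i) = r + (k-1)r - \sum_{i=1}^{k-1} t_i = rk - \sum_{i=1}^{k-1} t_i.
\end{equation*}
Rearranging gives $\sum_{i=1}^{k-1} t_i = rk - |V(H)| = r|E(H)| - |V(H)|$, which depends only on $H$, not on the chosen process.

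First I would establish the bookkeeping identity above carefully: each $H_i$ is connected (since the process may be taken connected, or more to the point, in our setting each $H_i$ is minimally connected hence connected), and the remark preceding this theorem already records that adding each new hyperedge introduces at least one new vertex, so $1 \le t_i \le r-1$ and the count $r - t_i$ of new vertices is a well-defined positive integer. The union of the vertex sets introduced at each stage is a disjoint union exhausting $V(H)$, which justifies summing the contributions. Then I would simply read off that $\sum t_i = r|E(H)| - |V(H)|$ is independent of $\mathcal{P}$.

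Strictly speaking, the theorem as stated quantifies over "all constructive processes for $H$" without the minimal-connectivity hypothesis, but the same argument applies verbatim to any constructive process in which every intermediate hypergraph is connected (so that no hyperedge can be added using only old vertices); for a general constructive process one should restrict attention to connected ones, or equivalently note that the displayed vertex count is exactly the statement that matters. The one genuine point requiring care — and the step I expect to be the only real obstacle — is verifying that at each stage the newly introduced vertices are genuinely disjoint from all previously introduced vertices, i.e.\ that the telescoping sum correctly counts $|V(H)|$; this is immediate from the definition of a constructive process (the $H_j$ form an increasing chain of subhypergraphs with $V(H_j) \subseteq V(H_{j+1})$ and $V(H_k) = V(H)$), but it is worth stating explicitly so the reader sees that no vertex is double-counted. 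Everything else is elementary arithmetic.
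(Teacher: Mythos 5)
Your proof is correct and follows essentially the same argument as the paper: counting that each new hyperedge $e_{i+1}$ contributes exactly $r-t_i$ new vertices and telescoping to get $|V(H)| = rk - \sum_{i=1}^{k-1} t_i$, so the tightness sum is determined by the order and size of $H$ alone. The paper phrases this as equating the vertex counts for two processes $\mathcal{P}$ and $\mathcal{P}'$, while you read off the explicit formula $t_H = r|E(H)| - |V(H)|$, but these are the same computation.
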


\begin{proof}
Let $H$ be a connected $r$-uniform hypergraph of size $k$ and suppose that $\mathcal{P}$ and $\mathcal{P}'$ are constructive processes for $H$ with tightness sequences $$t_1, t_2, \dots , t_{k-1} \quad \mbox{and} \quad t_1', t_2', \dots t_{k-1}',$$ respectively.  Then for $1\le i\le k-1$, the addition of $e_{i+1}$ in $\mathcal{P}$ requires the addition of $r-t_i$ new vertices.  An analogous statement can be made for $\mathcal{P}'$.  If $H$ has order $n$, then $$n=r+\mathop{\sum}\limits_{i=1}^{k-1} (r-t_i) =r+\mathop{\sum}\limits_{i=1}^{k-1} (r- t_i'),$$ from which it follows that $$\mathop{\sum}\limits_{i=1}^{k-1} t_i=\mathop{\sum}\limits_{i=1}^{k-1} t_i',$$ completing the proof of the theorem.
\end{proof}

Thus, $t_H$ is uniquely determined by $H$ and the order of $H$ is given by  $$|V(H)|=r+\mathop{\sum}\limits_{i=1}^{k-1} (r-t_i) =rk-t_H.$$ The following corollary will prove the equivalence between $(3)$ and $(5)$ in Definition \ref{tree} .  

\begin{corollary}
A connected $r$-uniform hypergraph $H$ is an $r$-uniform tree if and only if $t_{H}=k-1$.  Equivalently, a connected $r$-uniform hypergraph $H$ is an $r$-uniform tree if and only if it has order $|V(H)|=(r-1)|E(H)|+1$.  \label{treedef}\end{corollary}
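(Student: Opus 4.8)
The plan is to reduce the statement to the identity $|V(H)| = rk - t_{H}$ (with $k := |E(H)|$) obtained just before the corollary, and then to use the bound $t_i \ge 1$ on the individual tightnesses of a \emph{connected} constructive process. The ``equivalently'' clause is then immediate: since $|V(H)| = rk - t_H$, the equation $t_H = k-1$ is literally equivalent to $|V(H)| = rk - (k-1) = (r-1)k + 1$. So it suffices to prove that a connected $r$-uniform hypergraph $H$ of size $k$ is an $r$-uniform tree if and only if $t_H = k-1$.

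For the forward direction I would invoke part $(1)$ of Definition \ref{tree}: an $r$-uniform tree can be built hyperedge-by-hyperedge so that each new hyperedge meets the current hypergraph in exactly one vertex. Such a build is a (connected) constructive process whose tightness sequence is the constant sequence $1, 1, \dots, 1$ of length $k-1$; hence $t_H = \sum_{i=1}^{k-1} t_i = k-1$, using the previous theorem that the tightness sum does not depend on the chosen process.

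For the converse, suppose $H$ is connected with $t_H = k-1$. I would choose any connected constructive process $H_1, \dots, H_k$ for $H$ (one exists because $H$ is connected) with tightness sequence $t_1, \dots, t_{k-1}$. Since each $H_{i+1}$ is connected and $H_i$ is non-empty, the new hyperedge $e_{i+1}$ must contain a vertex of $V(H_i)$, so $t_i \ge 1$ for every $i$; as these $k-1$ positive integers sum to $t_H = k-1$, each of them equals $1$. Thus the process exhibits $H$ exactly as in Definition \ref{tree}$(1)$, so $H$ is an $r$-uniform tree. The argument is short, and the only points requiring care are the use of a connected constructive process in the converse (so that each $t_i \ge 1$) and the observation that the defining construction of a tree in Definition \ref{tree}$(1)$ is itself a legitimate constructive process; neither is a genuine obstacle.
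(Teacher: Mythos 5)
Your proposal is correct and follows essentially the same route as the paper: both rest on the identity $|V(H)| = rk - t_H$, the bound $t_i \ge 1$ for a connected constructive process, and the fact that a process with all tightnesses equal to $1$ is exactly the tree construction of Definition \ref{tree}(1). The only cosmetic difference is that you argue the converse directly (forcing each $t_i = 1$), whereas the paper phrases it as a contrapositive (not a tree implies some $t_i \ge 2$, hence $t_H \ge k$).
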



\begin{proof}
Clearly every hypergraph tree is minimally connected and has size $k$ if and only if it has order $r+(k-1)(r-1)$.  Thus, we need only show that if $H$ is a connected $r$-uniform hypergraph of size $k$ and order $r+(k-1)(r-1),$ then $H$ is a tree.  We prove the contrapositive to this statement.  Suppose that $H$ is a connected $r$-uniform hypergraph that is not a tree.  Then $H$ can be constructed hyperedge-by-hyperedge, with the resulting hypergraph being connected at each stage of the construction.  Since $H$ is assumed to not be a tree, some tightness $t_i\ge 2$.  So, the sum of the tightnesses of $H$ satisfies $t_H\ge k$, giving a maximal order of $r+(k-1)(r-1)-1$.
\end{proof}

Denote by $S_n^{(r)}$ the $r$-uniform star of order $n$ consisting of $r-1$ vertices in the intersection of all hyperedges (called the {\it center}), along with each hyperedge containing a single vertex of degree $1$.  This definition agrees with the more general definititon of a star given in \cite{BHR2}.  Observe that such a star is minimally connected since the removal of any hyperedge leaves the vertex of degree $1$ isolated. See Figure \ref{star} for an example of a $6$-uniform star.
\begin{figure}[h!]
\centerline{
{\includegraphics[width=0.6\textwidth]{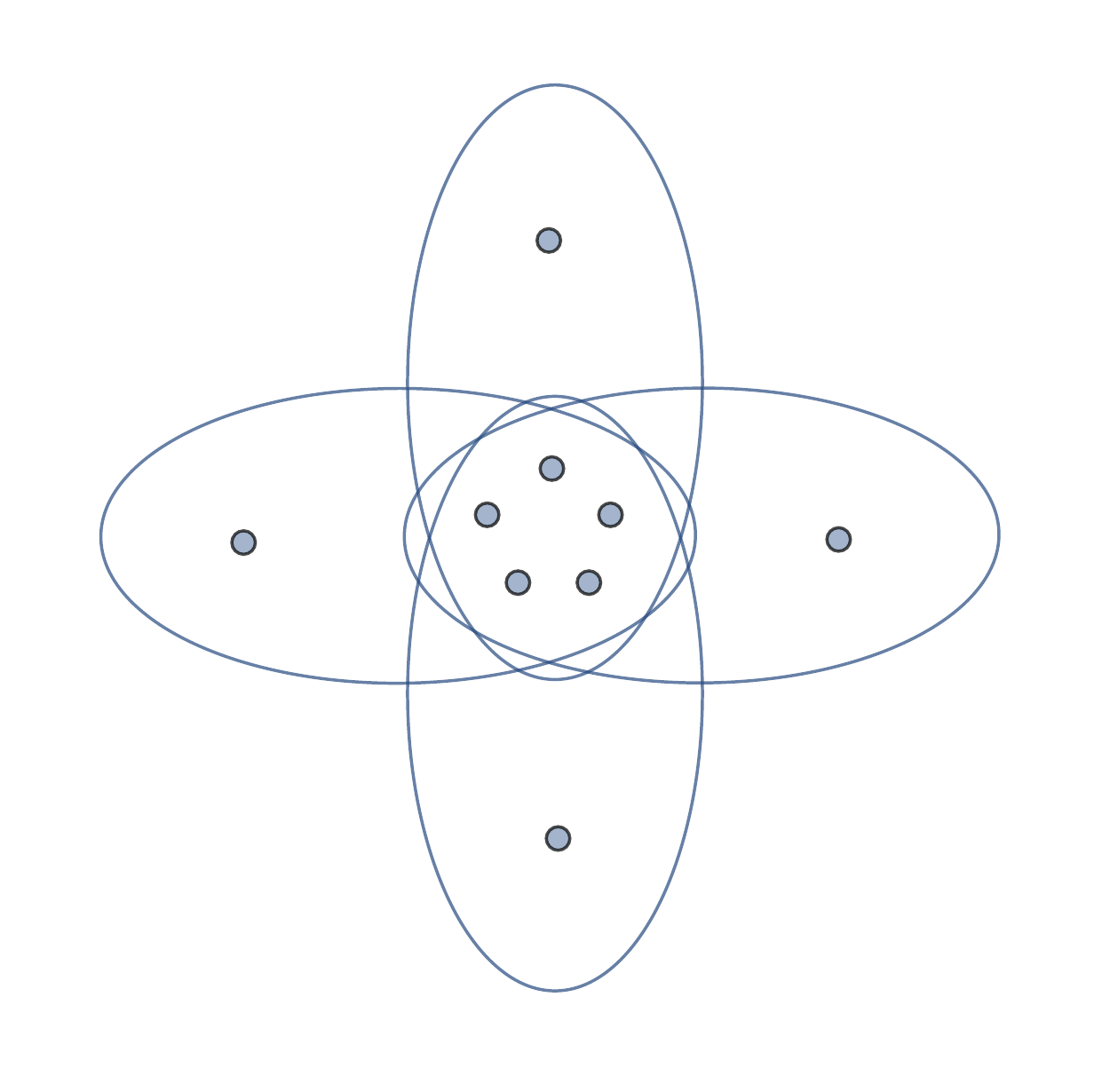}}}
\caption{A $6$-uniform star of order $9$.}\label{star}
\end{figure} 

\begin{theorem}
For $r\geq 3$, a minimally connected $r$-uniform hypergraph $H$ of order $n$ is isomorphic to $S_n^{(r)}$ if and only if $$t_H=(k-1)(r-1).$$
\end{theorem}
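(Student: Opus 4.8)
The plan is to prove both directions directly from the tightness machinery already established. For the forward direction, suppose $H \cong S_n^{(r)}$. In a star, every hyperedge meets the common center of size $r-1$ and contributes one new degree-$1$ vertex. Running any constructive process, when we add $e_{i+1}$ to $H_i$, the hyperedge $e_{i+1}$ already contains all $r-1$ center vertices (these appear in $e_1$, so they are in $V(H_i)$ for every $i \geq 1$), hence $t_i = |V(H_i) \cap e_{i+1}| \geq r-1$; since $t_i \leq r-1$ always, we get $t_i = r-1$ for each of the $k-1$ steps, so $t_H = (k-1)(r-1)$. Alternatively, this follows instantly from the order formula $|V(H)| = rk - t_H$: the star of size $k$ has order $(r-1) + k = r + (k-1)$... wait, one must be careful here — $S_n^{(r)}$ of size $k$ has $r-1$ center vertices plus $k$ leaves, so $n = r - 1 + k$, and then $t_H = rk - n = rk - (r-1) - k = (k-1)(r-1) + (r-1) - (r-1)$... let me just present it cleanly: $t_H = rk - n = rk - (r-1+k) = (r-1)(k-1)$. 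Good, so the forward direction is essentially the order count.

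For the converse, suppose $H$ is minimally connected of size $k$ with $t_H = (k-1)(r-1)$. Since each $t_i \leq r-1$ and they sum to $(k-1)(r-1)$ over $k-1$ terms, we must have $t_i = r-1$ for every $i$ in every constructive process. I would then argue inductively that $H_i \cong S^{(r)}_{?}$ for each $i$. The base case $H_1$ is a single hyperedge, trivially a star. For the inductive step, $e_{i+1}$ shares exactly $r-1$ vertices with $H_i$; the subtlety is to show these $r-1$ shared vertices are precisely the center of the star $H_i$, rather than some other $(r-1)$-subset of $V(H_i)$. Here minimal connectivity is the key lever: if the $r-1$ shared vertices were not all contained in a common hyperedge of $H_i$ — equivalently, if they included the unique leaf of some hyperedge $f$ of $H_i$ together with at least one center vertex — then after adding $e_{i+1}$, that leaf would have degree $2$ and (I'd check) removing $f$ would no longer disconnect $H_{i+1}$, contradicting that the process is minimally connected (which holds by Theorem \ref{equiv} since $H$ is minimally connected). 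Hence the $r-1$ shared vertices form the center, $e_{i+1}$ adds one new leaf, and $H_{i+1}$ is again a star. Taking $i = k$ gives $H \cong S^{(r)}_n$.

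I would streamline the converse as follows: first observe $t_i = r-1$ for all $i$ forces each $e_{i+1}$ to bring in exactly one new vertex, call it $w_{i+1}$, so $V(H) = e_1 \cup \{w_2, \dots, w_k\}$ with $|V(H)| = r + (k-1)$. Each $w_{i+1}$ has degree $1$ in $H$: it lies in $e_{i+1}$, and it cannot lie in any later $e_j$ (those contribute their own fresh vertices and share $r-1$ vertices with earlier stages — I need that $w_{i+1}$ is not among the $r-1$ shared vertices of a later step, which again uses the degree-$1$/minimal-connectivity argument above). Then every hyperedge has at most one such "new" vertex, so each hyperedge contains at least $r-1$ vertices from $e_1$, i.e., equals a copy of $e_1$'s vertices minus possibly one, forcing a common $(r-1)$-set — and one shows that common set is $e_1$ minus its own distinguished leaf, or rather that all $k$ hyperedges share the same $(r-1)$ vertices. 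Combined with $k$ distinct degree-$1$ vertices, this is exactly $S^{(r)}_n$.

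\emph{Main obstacle.} The genuinely delicate point is the converse's inductive step: ruling out the possibility that $e_{i+1}$ meets $H_i$ in an $(r-1)$-set that is \emph{not} the center of the star $H_i$. This is where I expect to need minimal connectivity of the process (not just connectivity), arguing that an "off-center" intersection would create a hyperedge whose removal no longer disconnects, contradicting Theorem \ref{equiv}. Once that is nailed down, everything else is bookkeeping with the order formula $|V(H)| = rk - t_H$.
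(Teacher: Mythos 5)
Your proposal is correct and follows essentially the same route as the paper: the forced equalities $t_i=r-1$, plus the crux that the new hyperedge cannot meet a degree-one vertex of the star (else removing that vertex's own hyperedge would no longer disconnect the hypergraph, violating minimal connectivity), which is exactly the paper's key step. The only cosmetic difference is that you run the induction forward along the constructive process, while the paper peels off the last hyperedge $e_k$ and applies the inductive hypothesis to the resulting smaller star.
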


\begin{proof}
If a minimally connected hypergraph $H$ is a star, then every constructive process for $H$ requires each tightness $t_i=r-1$, giving $t_H=(k-1)(r-1)$.  To prove the converse, we will use induction on $k$.  When $k=1$, $t_H=0$ and $H$ consists of a single hyperedge, which is trivially a star.  Now suppose that $H$ is a minimally connected hypergraph of size $k>1$ in which $t_H=(k-1)(r-1)$ and that all minimally connected hypergraphs of size $k-1$ (with tightness sum equal to $(k-2)(r-1)$) are necessarily stars.  As a minimally connected hypergraph, $H$ has a minimally connected constructive process $\mathcal{P}$.  Let $e_k$ be the last hyperedge added in such a process and consider the hypergraph $H'$ formed by removing $e_k$ (and all resulting singletons) from $H$.  Since $t_H=(k-1)(r-1)$, all tightnesses $t_i$ in $\mathcal{P}$ are equal to $r-1$.  Hence, the removal of $e_k$ leaves only one singleton and we find that $|V(H)|=|V(H')|+1$.  So, \begin{align}t_{H'}&=r(k-1)-|V(H')|\notag \\
&=r(k-1)-(|V(H)|-1)\notag \\
&=r(k-1)-(rk-t_H-1)\notag \\
&=r(k-1)-(rk-(k-1)(r-1)-1) \notag \\
&=(k-2)(r-1).\notag \end{align}  By the inductive hypothesis, $H'$ is a star.  When adding back in hyperedge $e_k$ to form $H$, including any vertex $x$ of degree one will prevent the hyperedge $e_i$ that includes $x$ from disconnecting $H$.  Thus, the only vertices that can be included in the intersection of $e_k$ with $H'$ are those of degree greater than one in $H'$.  Therefore, $H$ is a star.
\end{proof}

\subsection{The Possible Sizes of Minimally Connected Subhypergraphs}

We now focus our attention on finding bounds for the size of a minimally connected $r$-uniform hypergraph. Let $H$ be a connected hypergraph of order at least $r+1$.  From Theorem \ref{equiv}, each minimally connected spanning hypergraph $M$ can be constructed hyperedge-by-hyperedge, with each resulting hypergraph being minimally connected, and this process results in a (finite) sequence of tightnesses $t_2, t_3,\dots, t_m$, where $1\le t_i  \le r-1$ and $m$ is the size of $M$.  The maximum number of hyperedges $M$ can contain occurs when $M$ is $(r-1)$-tight (i.e., $t= t_2, =t_3=\cdots t_m=r-1$).  In this case, we find that $M$ contains $m=n-r+1$ hyperedges.  For example, consider the star in Figure \ref{star}.

The minimum number of hyperedges that $M$ can contain occurs when $M$ is a tree, or is close to being a tree (with say, only one tightness not equal to $1$).  If $M$ is a tree that has order $n$, then $n-1\equiv 0\pmod{r-1}$.  If $m$ is the size of $M$, then $$m=\frac{n-1}{r-1}.$$ If $M$ is not a tree, but is close to being a tree, then suppose that $$n-1= (m-1)(r-1)+k, \quad \mbox{where} \quad 1\le k\le r-2.$$  It follows that $$m=\frac{n-1}{r-1}+\frac{r-1-k}{r-1}=\ceil[\Big]{\frac{n-1}{r-1}}.$$  Putting together these upper and lower bounds, we have shown the following.

\begin{theorem}
Let $H$ be a connected $r$-uniform hypergraph of order $n$ with minimally connected spanning hypergraph $M$ of size $m$.  Then $$\ceil[\Big]{\frac{n-1}{r-1}}\le m \le n-r+1.$$ \label{size}
\end{theorem}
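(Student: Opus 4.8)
The plan is to establish the two inequalities separately, both using the tightness machinery from Theorems~\ref{equiv} and the identity $|V(H)| = rk - t_H$, specialized here to $M$: since $M$ is a spanning subhypergraph, $|V(M)| = n$, so if $M$ has size $m$ then $n = rm - t_M$, where $t_M = \sum_{i=1}^{m-1} t_i$ with each $t_i \in \{1, \dots, r-1\}$.

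\textbf{Upper bound.} From $n = rm - t_M$ we get $t_M = rm - n$. Since there are $m-1$ tightnesses each at least $1$, we have $t_M \ge m-1$, so $rm - n \ge m - 1$, which rearranges to $(r-1)m \ge n - 1$, i.e. $m \ge \frac{n-1}{r-1}$. Combined with the fact that $m$ is an integer, this yields $m \ge \ceil[\big]{\frac{n-1}{r-1}}$, the left inequality. (This is essentially the ``close to a tree'' computation already sketched in the text, reorganized to flow from the tightness identity.)

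\textbf{Lower bound direction of $m$, i.e. the cap $m \le n-r+1$.} Each tightness satisfies $t_i \le r-1$, so $t_M \le (m-1)(r-1)$. Plugging into $n = rm - t_M$ gives $n \ge rm - (m-1)(r-1) = rm - (r-1)m + (r-1) = m + r - 1$, hence $m \le n - r + 1$. This handles the right inequality.

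\textbf{Existence.} Since $H$ is connected with $n \ge r+1$, Theorem~\ref{equiv} (and the observation following it) guarantees that $H$ has \emph{some} minimally connected spanning subhypergraph $M$: run a connected constructive process for $H$, and whenever adding a hyperedge would fail to introduce a new vertex (hence fail minimal connectivity), skip it; the resulting subhypergraph is minimally connected and still spans $V(H)$. So the hypotheses are not vacuous, and the bounds above apply to any such $M$. I do not anticipate a genuine obstacle here — everything reduces to the linear identity $n = rm - t_M$ and the trivial bounds $1 \le t_i \le r-1$; the only point requiring a little care is the ceiling in the lower bound, which follows because $m$ is an integer satisfying $m \ge \frac{n-1}{r-1}$. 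One could additionally remark that both bounds are sharp: the upper bound is attained by the star $S_n^{(r)}$ (all $t_i = r-1$), and the lower bound is attained when $M$ is a tree or differs from a tree by a single tightness exceeding $1$, as in the discussion preceding the statement.
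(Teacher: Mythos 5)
Your proof is correct and takes essentially the same approach as the paper: both arguments run a minimally connected constructive process for $M$, use the identity $n = rm - t_M$ together with the bounds $1 \le t_i \le r-1$ on the tightnesses, and your version simply repackages the paper's extremal-case discussion (all tightnesses equal to $r-1$ versus the tree/near-tree case) as a direct inequality chain plus an integrality remark for the ceiling. The only blemish is cosmetic: your paragraph labeled ``Upper bound'' actually establishes the lower bound $m \ge \ceil[\big]{\frac{n-1}{r-1}}$ and vice versa, but the mathematics itself is sound.
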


Theorem \ref{size} hints at a question: for a given order $n$ is there a minimally connected $r$-uniform hypergraph for every permissible value of $m$?  The following result shows that this is the case.

\begin{theorem}
Fix an order $n\ge r$ and let $$\ceil[\Big]{\frac{n-1}{r-1}}\le m \le n-r+1.$$  \label{size2} Then there is a minimally connected $r$-uniform hypergraph of size $m$ and order $n$. 
\end{theorem}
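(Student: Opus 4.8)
The plan is to build the desired hypergraph explicitly by a constructive process, controlling the tightness sequence to land on exactly the prescribed values of $m$ and $n$. Recall from the discussion preceding Theorem~\ref{size} that a minimally connected $r$-uniform hypergraph built hyperedge-by-hyperedge with tightnesses $t_2,\dots,t_m$ has order $n = rm - t_H$ where $t_H = \sum_{i=2}^m t_i$, and each $t_i$ ranges over $\{1,\dots,r-1\}$. So the problem reduces to a purely numerical one: given $m$ in the stated range, I need to exhibit integers $t_2,\dots,t_m$ with $1 \le t_i \le r-1$ and $\sum_{i=2}^m t_i = rm - n$, \emph{together with} an actual construction realizing that tightness sequence while staying minimally connected at every stage. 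The numerical feasibility is easy to check: the sum $\sum t_i$ ranges over $[m-1, (m-1)(r-1)]$ as the $t_i$ vary, and one verifies that $\ceil{\frac{n-1}{r-1}} \le m \le n-r+1$ is exactly equivalent to $m-1 \le rm-n \le (m-1)(r-1)$, so a valid tightness sequence exists (e.g.\ take as many $t_i$ equal to $r-1$ as possible and one ``remainder'' value, the rest equal to $1$).

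The key construction step is this: I will start with a single hyperedge $H_1$ and then, at each stage, attach a new hyperedge $e_{i+1}$ that meets the current hypergraph $H_i$ in exactly $t_{i+1}$ of its vertices and introduces $r - t_{i+1}$ brand-new vertices. The crucial point is that such an attachment always keeps the hypergraph minimally connected: the $r - t_{i+1} \ge 1$ new vertices all have degree $1$ in $e_{i+1}$, so removing $e_{i+1}$ isolates them and disconnects the hypergraph, while removing any earlier hyperedge $e_j$ still disconnects the hypergraph because (as observed in the text following Theorem~\ref{equiv}) adding hyperedges can never \emph{undo} a disconnection — the component structure witnessing $e_j$'s criticality in $H_{i}$ persists. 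Hence by induction every $H_i$ is minimally connected, and in particular so is the final $H_m$. To make the intersection $|V(H_i) \cap e_{i+1}| = t_{i+1}$ literally achievable, I only need $H_i$ to contain at least $t_{i+1} \le r-1$ vertices, which holds from the start since $H_1$ already has $r$ vertices. Tracking orders, $H_m$ has order $r + \sum_{i=2}^m (r - t_i) = rm - t_H = n$ and size $m$, as required.

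The main obstacle — really the only place requiring care — is the verification that the arithmetic conditions $\ceil{\frac{n-1}{r-1}} \le m \le n-r+1$ are equivalent to the existence of a valid tightness sequence summing to $rm - n$, i.e.\ to $m - 1 \le rm - n \le (m-1)(r-1)$. The right inequality rearranges to $n \ge rm - (m-1)(r-1) = m + r - 1$, i.e.\ $m \le n - r + 1$. The left inequality rearranges to $n \le rm - (m-1) = (r-1)m + 1$, i.e.\ $m \ge \frac{n-1}{r-1}$, and since $m$ is an integer this is $m \ge \ceil{\frac{n-1}{r-1}}$. So the equivalence is immediate, and nothing deeper is needed. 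One should also note the trivial boundary case $m = 1$ (forced when $n = r$), where $H$ is a single hyperedge and there is no tightness sequence to choose; this is consistent since then $\ceil{\frac{n-1}{r-1}} = 1 = n - r + 1$.
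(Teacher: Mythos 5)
The arithmetic half of your argument is fine (and matches what the paper's choice of $z=n-r+1-m$ is doing implicitly): the existence of a tightness sequence $t_2,\dots,t_m$ with $1\le t_i\le r-1$ and $\sum t_i=rm-n$ is indeed equivalent to $\ceil[\big]{\frac{n-1}{r-1}}\le m\le n-r+1$. The gap is in the realization step. You claim that attaching \emph{any} new hyperedge meeting $H_i$ in $t_{i+1}\le r-1$ vertices and carrying $r-t_{i+1}\ge 1$ new vertices keeps the hypergraph minimally connected, because ``adding hyperedges can never undo a disconnection.'' That is false as a general principle: the remark you cite from the paper is made only for a constructive process already known to be minimally connected at every stage (it is extracted from a minimally connected $H$ via Theorem \ref{equiv}), not as a licence for arbitrary attachments. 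Concretely, take $r=3$ and build $e_1=\{a,b,c\}$, $e_2=\{c,d,e\}$, $e_3=\{a,d,x\}$, $e_4=\{b,e,y\}$; each step adds a new vertex and has tightness at most $2$, and $H_3$ is minimally connected, but in $H_4$ the removal of $e_1$ leaves a connected hypergraph (every vertex of $e_1$ lies in another edge and $e_2,e_3,e_4$ hang together), so $H_4$ is not minimally connected even though it realizes a legitimate tightness sequence $(1,2,2)$ with the right order and size. So ``any attachment realizing the tightness sequence'' does not prove the theorem; the choice of \emph{which} old vertices each new edge meets is exactly where the content lies.

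The fix is to specify the attachments so that criticality of every edge is visibly preserved, which is what the paper's proof does: all later edges are glued only onto prescribed vertices of $e_1$, so that every edge except possibly $e_1$ retains a vertex of degree one (whose isolation witnesses criticality), and the one exceptional edge $e_1$ is handled by noting that its removal severs $e_2$'s private vertices from the rest. If you add such a concrete gluing rule (for instance: confine all intersections to $e_1$, reserving one vertex of $e_1$ exclusively for the single low-tightness edge when needed) and then check minimal connectivity of the final hypergraph directly via degree-one vertices, your numerical reduction plus that construction gives a complete proof essentially equivalent to the paper's.
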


\begin{proof}
Let $z=n-r+1-m$. We will define a constructive process which culminates in an $r$-uniform hypergraph of size $m$ and order $n$. Let us choose $r$ vertices to form $e_1$ and label these $v_1,v_2,...,v_r$. We then proceed in two cases based on the value of $z$:\\
\underline{Case 1:} \ If $z\geq r-2,$ then define $a,b\in \mathbb{Z}$ such that $z=a(r-2)+b.$ For $1\leq i \leq a$ let $t_i=1,$ $t_{a+1}= r-b-1,$ and $t_j=r-1$ for $a+1<j<m-1$. For $e_2$ let $|e_1\cap e_2|=\{v_1\}.$ For $e_k$ ($2<k<m)$ pick any $t_{i-1}$ vertices from $v_2,v_3,...,v_r.$ Then every edge, except possibly $e_1$ contains a vertex of degree $1$. If $e_1$ is removed then $e_2$ is disconnected from the rest of the hypergraph. Thus the resulting hypergraph is of size $k$, order $n$ and is minimally connected.\\
\underline{Case 2:} \ If $z<r-2,$ then let $t_1=r-z-1.$ Let $|e_1\cap e_2|= \{v_1,v_2,...,v_{t_1}\}$ create $m-2$ edges $e_3,e_4,...,e_m$ and for every $2<i\leq m$, let $|e_1\cap e_i|=\{v_1,v_2,...,v_{r-1}\}.$ Thus every edge has at least one vertex of degree $1$, and so the resulting hypergraph is of size $k$, order $n$ and is minimally connected.  \\
\noindent This completes the proof of the theorem. 
\end{proof}

Theorem \ref{size2} leads to an interesting observation. The complete hypergraph $K_n^{(r)}$ contains minimally connected subhypergraphs for every permissible value of $m$. This fact stands in stark contrast to the context of $2$-graphs, where it is well known that the size of a spanning tree is completely determined by the order of the parent graph.

\subsection{Existence and An Algorithm for Minimally Connected Subhypergraphs}

The original motivation for this paper was to generalize a ubiquitous result in the study of trees: every connected graph contains a spanning tree. We now provide an analogous result for $r$-uniform hypergraphs and describe a polynomial-time algorithm for finding such an subhypergaph.

\begin{theorem}
Every connected $r$-uniform hypergraph contains a spanning minimally connected hypergraph. \label{span}
\end{theorem}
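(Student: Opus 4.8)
The plan is to start with any connected $r$-uniform hypergraph $H$ and repeatedly delete "redundant" hyperedges until no further deletion is possible, then argue the result is a spanning minimally connected subhypergraph. More precisely, I would call a hyperedge $e \in E(H)$ \emph{removable} if $H - e$ (keeping all vertices) is still connected. If $H$ has no removable hyperedge, then by definition $H$ is already minimally connected and we are done. Otherwise, pick any removable hyperedge $e$ and replace $H$ by $H - e$; this preserves both connectivity and the vertex set. Since $|E(H)|$ is finite and strictly decreases with each step, this process terminates after finitely many steps at a subhypergraph $M$ with $V(M) = V(H)$, $E(M) \subseteq E(H)$, $M$ connected, and no removable hyperedge in $M$ — which is exactly the statement that $M$ is minimally connected.

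The key steps, in order, are: (1) observe that deleting a hyperedge never adds vertices, so the vertex set is preserved throughout — this is what makes the final object \emph{spanning}; (2) observe that deleting a \emph{removable} hyperedge preserves connectivity by definition of removable; (3) observe the process must halt because $|E|$ is a strictly decreasing sequence of nonnegative integers; (4) observe that a connected hypergraph with no removable hyperedge is precisely a minimally connected hypergraph, so the terminal object $M$ witnesses the theorem. One could alternatively phrase this via Theorem \ref{equiv}: choose a connected constructive process for $H$ and greedily skip any hyperedge whose addition is not needed to keep the current hypergraph connected, but the deletion argument is cleaner and avoids having to track the order in which hyperedges appear.

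The main subtlety — and really the only thing requiring any care — is step (4): one must be sure that "no removable hyperedge" is genuinely equivalent to "minimally connected" as defined in the paper, i.e., that removing a non-removable hyperedge always disconnects $H$ while retaining all vertices. This is immediate from the definitions since a hyperedge is removable exactly when its removal leaves the hypergraph connected, so "every hyperedge is non-removable" is the negation, namely "removal of any hyperedge disconnects $H$," which is the definition of minimally connected. There is no real obstacle here; the theorem is the hypergraph analogue of the elementary fact that every connected graph has a spanning connected subgraph that is edge-minimal, and the proof transfers verbatim. The genuinely substantive content of this subsection — making the deletion process run in polynomial time — is an algorithmic matter addressed separately and does not affect the existence proof.
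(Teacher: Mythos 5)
Your proposal is correct and follows essentially the same argument as the paper: repeatedly delete a hyperedge whose removal keeps the hypergraph connected (retaining all vertices), and use finiteness of $E(H)$ to conclude the process terminates in a spanning minimally connected subhypergraph. Your additional care in spelling out step (4) is fine but matches the paper's definition directly, so there is nothing further to add.
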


\begin{proof}
Let $H$ be a connected $r$-uniform hypergraph.  If $H$ is not minimally connected, then there exists some hyperedge $e_1$ whose removal does not disconnect $H$.  Let $H_1$ be the hypergraph formed by removing $e_1$ from $H$.  If $H_1$ is not minimally connected, then repeat this process.  As $H$ has a finite number of hyperedges, the process must eventually terminate with a minimally connected hypergraph $H_i$ that spans the vertices in $H$.
\end{proof}

In this subsection we discuss an algorithm for finding a minimally connected spanning subhypergraph of a graph. In order to make the discussion clear, we will give necessary definitions. 

An \textit{algorithm} is an unambiguous, step-by-step process that takes input and terminates with some output. If $f$ and $g$ are functions from $\mathbb{N}$ to itself, we say that $g(n)$ is \textit{ $O(f(n))$} if there exist positive constants $c$ and $n_0$ such that $0 \leq g(n) \leq cf(n)$ for all $n \geq n_0$. For an algorithm $A$, we let $T_A(n)$ denote the maximum number of steps that it takes $A$ to terminate on an algorithm of size $n$; note that $T_A$ is a function from $\mathbb{N}$ to $\mathbb{N}$. Given an algorithm $A$ and a function $f(n)$, we say that an algorithm $A$ is $O(f(n))$ if the function $T_A$ is $O(f(n))$. If there exists a polynomial $p(n)$ such that $A$ is $O(p(n))$, we say that $A$ is \textit{polynomial time}.  For a more thorough definition of these concepts, we would direct the curious reader towards the standard~\cite[Chapter 2]{CLR}. Throughout the rest of this section, assume that $n=|V(H)|$, where $H$ is an $r$-uniform hypergraph. 

Gallo, Longo, Pallottino, and Nguyen~\cite{GLPN} provide an algorithm \textbf{Visit($H,v$)} which takes as input a hypergraph $H$ and a vertex $v \in V(H)$, and returns all vertices in $H$ that are in the same connected component as $v$. The \textbf{Visit} algorithm is $O(s)$, where $$s = \displaystyle\sum_{e \in E(H)} |e|.$$ If we assume that $H$ is an $r$-uniform hypergraph, then there are at most $\binom{n}{r}$ hyperedges, and each $e \in E(H)$ has $|e| = r$. Recall that if $r$ is a fixed constant, $\binom{n}{r}$ is $O(n^r)$ when viewed as a function of $n$.  Thus, in the case of an $r$-uniform hypergraph, we have $s = O(n^r),$ and \textbf{Visit} becomes a polynomial time algorithm.
Notice that since being in the same connected component is an equivalence relation, it is not hard to see that an $r$-uniform hypergraph $H$ is connected if and only if \textbf{Visit($H,v$)} returns the entire set $V(H)$ for any $v \in V(H)$. 

\begin{theorem}
For any value of $r$, there exists an algorithm that takes as input a connected $r$-uniform hypergraph $H$ and returns $M$, a minimally connected spanning subhypergraph of $H$. If $r$ is viewed as a fixed constant, this algorithm runs in polynomial time on the number of vertices in $H$. 
\end{theorem}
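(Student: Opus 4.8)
The plan is to turn the existence proof of Theorem \ref{span} into an explicit \emph{reverse-deletion} procedure and then bound its cost using the \textbf{Visit} subroutine. First I would describe the algorithm: given a connected $r$-uniform hypergraph $H$, set $M := H$, fix a vertex $v \in V(H)$, enumerate $E(H) = \{e_1, \dots, e_p\}$ in an arbitrary order, and then loop over $i = 1, \dots, p$; at step $i$, if $e_i$ is still an edge of $M$, form the hypergraph $M - e_i$ (deleting the hyperedge but keeping every vertex), call \textbf{Visit}$(M - e_i, v)$, and if the returned set equals $V(H)$ then replace $M$ by $M - e_i$, otherwise do nothing. When the loop finishes, output $M$. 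This is exactly the process in the proof of Theorem \ref{span}, made deterministic by fixing an edge ordering.

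Next I would verify correctness in three parts. Since no vertex is ever removed, $M$ always has vertex set $V(H)$, so the output spans $H$. Since $M$ starts connected and an edge is deleted only after \textbf{Visit} certifies that the result is still connected, $M$ is connected at every stage, and in particular at the end. The crux is to see that the final $M$ is \emph{minimally} connected. For this I would take any $e \in E(M)$ and let $M^{\sharp}$ denote the working value of $M$ at the moment $e$ was examined in the loop; because $e$ was not deleted, $M^{\sharp} - e$ was disconnected. Since the algorithm only deletes edges as the loop proceeds, $E(M) \subseteq E(M^{\sharp})$, so $M - e$ is a spanning subhypergraph of $M^{\sharp} - e$ on the common vertex set $V(H)$; were $M - e$ connected, so too would be $M^{\sharp} - e$, a contradiction. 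Hence the removal of any hyperedge disconnects $M$, which is the definition of minimally connected.

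Finally I would analyze the running time with $r$ held fixed and $n = |V(H)|$. An $r$-uniform hypergraph has at most $\binom{n}{r} = O(n^r)$ hyperedges, so the loop executes $O(n^r)$ times. Each iteration builds $M - e_i$, runs \textbf{Visit}, which the discussion preceding the theorem shows costs $O(s) = O(n^r)$, and compares the returned vertex set with $V(H)$ in $O(n)$ time; so each iteration is $O(n^r)$, and the total is $O(n^r \cdot n^r) = O(n^{2r})$, a polynomial in $n$. The one point that genuinely needs attention — and the only place the argument is not completely routine — is the minimality claim: it rests on the monotonicity observation that deleting a hyperedge can never reconnect a hypergraph that an earlier deletion already disconnected, so an edge whose removal broke a larger intermediate hypergraph still breaks the final one.
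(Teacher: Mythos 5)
Your proposal is correct and follows essentially the same route as the paper: a reverse-deletion pass over the edges, using \textbf{Visit} to test connectivity before each removal, with the same $O(n^r)\cdot O(n^r)=O(n^{2r})$ bound. Your explicit monotonicity argument for minimality (an edge whose removal disconnected an intermediate hypergraph still disconnects the smaller final one) is just a more carefully spelled-out version of the paper's one-line justification.
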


\begin{proof}
Consider the following algorithm:  Let $H^*$ be a copy of $H$. Begin by choosing an arbitrary vertex $v^* \in H$. For each edge $e$ in $E(H^*)$, run \textbf{Visit($H^* - \{e \}, v^*$)} to determine if $e$ can be removed without disconnecting $H^*$. If yes, remove $e$ from $E(H^*)$. 
Note that  $H^* - \{e\}$ can only be connected if $H^* - \{e\}$ will contain all of $V(H^*)$. So at each stage, $H^*$ remains a spanning subhypergraph of $H$. This algorithm will terminate with $H^*$ as a minimally connected subhypergraph of $H$, since if an edge of $E(H^*)$ could be removed without disconnecting $H^*$, that edge would have been removed at the stage it was considered. Since $H$ is an $r$-uniform hypergraph, there are at most $\binom{n}{r}$ hyperedges, which is $O(n^r)$, and for each hyperedge we call \textbf{Visit($H,v^*$)} which we know is an $O(n^r)$ algorithm. Hence the algorithm is $O(n^{2r})$, which is a polynomial in $r$. 
\end{proof}

It is worth emphasizing that these results provide a sharp contrast  between spanning minimally connected subhypergraphs and spanning trees. For instance, for a $4$-uniform hypergraph on $n$ vertices, we have just shown that there must be a spanning minimally connected subhypergraph, and in fact there is an $O(n^8)$ algorithm to find one. As discussed in the Introduction, $4$-uniform hypergraphs do not always have spanning trees, and no known polynomial time algorithm is known to find such a spanning tree if it does exist. We would hazard the following intuitive explanation for the potential gap in complexity between the problems. Our algorithm is able to consider each edge one at a time and check whether removal will result in a disconnected subhypergraph. The problem of finding a spanning tree adds the additional difficulty of avoiding a cycle, which is a more global property based on collections of edges rather than single edges. 






\section{Chromatic numbers of Minimally Connected Hypergraphs}\label{chromat}

Chromatic numbers give insight into the connectivity of a graph or hypergraph.  Let $\chi _w$ and $\chi _s$ denote the weak and strong chromatic numbers, respectively.  That is, $\chi _w(H)$ is the minimum number of colors needed to properly color the vertices of $H$ so that no hyperedge is monochromatic  and $\chi _s(H)$ is the minimum number of colors needed to color the vertices of $H$ so that every pair of adjacent vertices receive different colors. When these concepts are restricted to the case of $2$-graphs, they both agree with that of the chromatic number.

\begin{theorem}
If $H$ is a minimally connected $r$-uniform hypergraph, then $\chi _w(H)=2.$
\end{theorem}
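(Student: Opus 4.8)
The plan is to use the minimally connected constructive process guaranteed by Theorem~\ref{equiv} and to $2$-color the vertices greedily as each new hyperedge is added, taking advantage of the fact that every step introduces at least one brand-new vertex. First I would fix a minimally connected constructive process $H_1, H_2, \dots, H_k = H$ with hyperedges $e_1, \dots, e_k$, and set up the induction: the base case $H_1$ is a single hyperedge of size $r \ge 3$, which is trivially weakly $2$-colorable (color one vertex red, the rest blue), and clearly $\chi_w \ge 2$ since any hyperedge is nonempty. For the inductive step, suppose $H_i$ has a weak $2$-coloring; when $e_{i+1}$ is added, recall from the discussion after Theorem~\ref{equiv} that at least one new vertex is created, so $e_{i+1}$ contains at least one vertex not yet colored. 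I would keep the existing coloring on $V(H_i)$, then color the new vertices of $e_{i+1}$: if the old vertices of $e_{i+1}$ are not already all the same color, any assignment to the new vertices works; if they are all (say) red, then color at least one new vertex blue. Either way $e_{i+1}$ is non-monochromatic, and no previously-added hyperedge changes its color classes, so $H_{i+1}$ is weakly $2$-colored. Hence $H = H_k$ satisfies $\chi_w(H) = 2$.

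The one point that needs a little care — and which I expect to be the only real obstacle — is making sure that extending the coloring to the new vertices of $e_{i+1}$ never retroactively spoils a hyperedge added earlier. This is immediate once one observes that the new vertices of $e_{i+1}$ belong to \emph{no} earlier hyperedge: in the constructive process, $e_{i+1}$ is the first hyperedge in the sequence to use these vertices at all, so assigning them colors cannot affect the monochromatic/non-monochromatic status of any $e_j$ with $j \le i$. I would state this observation explicitly, since it is what makes the greedy argument actually inductive rather than circular.

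An alternative, even shorter route would be: pick any vertex $v$, color it one color and everything else the other. Since $H$ is connected with $r \ge 3$, every hyperedge has at least two vertices, and one checks that with a minimally connected structure every hyperedge contains a vertex of degree\dots\ — but this is not true in general (the second hypergraph in Figure~\ref{minimal} has a hyperedge all of whose vertices have degree $\ge 2$), so the naive one-vertex argument fails and the constructive-process induction above is the cleaner approach. I would therefore present the inductive proof and note only in passing that minimal connectivity (not mere connectivity) is what is being used, via Theorem~\ref{equiv}.
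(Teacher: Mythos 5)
Your proposal is correct and follows essentially the same route as the paper: invoke the constructive process from Theorem~\ref{equiv}, note that each new hyperedge introduces at least one new vertex, and extend a weak $2$-coloring greedily by induction on the number of hyperedges. Your explicit remark that the newly created vertices lie in no earlier hyperedge (so recoloring cannot spoil previous hyperedges) is a point the paper leaves implicit, but the argument is otherwise the same.
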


\begin{proof}
Let $H$ be a minimally connected $r$-uniform hypergraph with $m$ edges. From Theorem \ref{equiv}, we know that a minimally connected hypergraph can be constructed hyperedge-by-hyperedge with a connected hypergraph each step of the way and such that each new hyperedge requires the addition of a new vertex.  Suppose that $H_i$ is the connected hypergraph formed after adding hyperedge $e_i$ ($1\le i \le m$).  We proceed by induction on $m$ to prove that $\chi _w (H)=2$.  $H_1$ consists of a single hyperedge, so its vertices can be trivially $2$-colored.  Now suppose that $H_i$ can be weakly $2$-colored.  When adding $e_{i+1}$ to construct $H_{i+1}$, there are three possibilities: the vertices in $E(H_i)\cap e_{i+1}$ are all the same color or receive both colors $1$ and $2$.  In the former case, give the new vertex added with $e_{i+1}$ the other color.  In the latter case, the new vertex can receive either color. In both cases, we find that $H_{i+1}$ can be weakly $2$-colored, and hence, $\chi_w(H)=2$.  
\end{proof}

While $r$-uniform trees with size $k\ge 1$ have strong chromatic number $\chi _s(T)=r$ (this is a simple inductive exercise to confirm), 
we are unable to provide such precise limitations on the strong chromatic number of hypergraphs that are only assumed to be minimally connected.  The following theorem demonstrates a method for finding minimally connected hypergraphs with arbitrarily large strong chromatic numbers.  Note that the following statement is not true for minimally connected graphs ($2$-uniform trees), so one must assume $r\ge 3$.

\begin{theorem}
\label{Thm4}
For all natural numbers $n\ge r\ge 3$, there exists a minimally connected $r$-uniform hypergraph with strong chromatic number equal to $n$.
\end{theorem}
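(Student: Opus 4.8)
The plan is to build, for each $n \ge r \ge 3$, a minimally connected $r$-uniform hypergraph whose ``strong graph'' (the $2$-graph on $V(H)$ with an edge between every pair of vertices sharing a hyperedge) contains a clique of size $n$, thereby forcing $\chi_s(H) \ge n$, while simultaneously keeping the construction minimally connected and keeping $\chi_s(H) \le n$. The natural source of a large clique is a single hyperedge together with overlapping hyperedges: if $r$ hyperedges pairwise intersect in a controlled way we can force many vertices to be pairwise adjacent. Concretely, first I would handle the base case $n = r$: a single hyperedge $e_1 = \{v_1,\dots,v_r\}$ is minimally connected (trivially) and has $\chi_s = r$ since all $r$ vertices are mutually adjacent. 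For $n > r$ I would grow this by attaching hyperedges that each introduce new vertices all of which become adjacent to a fixed core.

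The cleanest realization: take a ``star-like'' seed on a shared set and then add pendant hyperedges. Start with $e_1 = \{v_1, \dots, v_r\}$. Add $e_2$ with $e_1 \cap e_2 = \{v_1, \dots, v_{r-1}\}$, contributing one new vertex $v_{r+1}$; then $v_{r+1}$ is adjacent to $v_1, \dots, v_{r-1}$. Continuing to add hyperedges $e_i$ each sharing the same $(r-1)$-set $\{v_1,\dots,v_{r-1}\}$ and contributing one new vertex builds exactly the star $S^{(r)}_{\,\cdot}$ of Section 3, which is minimally connected (each added vertex has degree $1$). After adding $n - r + 1$ such hyperedges we have vertices $v_1, \dots, v_{r-1}, v_r, v_{r+1}, \dots, v_{n}$; the set $\{v_1, \dots, v_{r-1}\}$ plus any one pendant vertex is a clique of size $r$, but that only gives $\chi_s \ge r$, not $n$. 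So a pure star is \emph{not} enough — pendant vertices attached to a common core are pairwise non-adjacent. I therefore need the added hyperedges to overlap each other, not just the core.

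The fix is to make the intersections cascade so that \emph{all} vertices involved become pairwise adjacent. Take $e_1 = \{v_1,\dots,v_r\}$ and for $2 \le i \le n-r+1$ let $e_i = \{v_i, v_{i+1}, \dots, v_{i+r-1}\}$ — a ``loose-ish'' sliding window, except I want large overlaps, so instead use $e_i = (e_{i-1} \setminus \{v_{i-1}\}) \cup \{v_{r+i-1}\}$, i.e. each new hyperedge drops the oldest vertex and adds one brand-new vertex. Hmm, but this still does not make $v_1$ adjacent to $v_n$. To get a genuine $n$-clique one needs every pair among $n$ vertices to co-occur in some hyperedge; with $r$-sets and each new edge bringing exactly one new vertex this is impossible once $n$ is large, since a new vertex only becomes adjacent to the $r-1$ vertices in its hyperedge. \textbf{This is the main obstacle}, and it shows a clique is the wrong certificate. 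Instead I would lower-bound $\chi_s(H)$ by the chromatic number of the strong graph directly via an odd-cycle or a more clever global structure, OR — the approach I would actually pursue — build $H$ so that its strong graph is itself (isomorphic to) a graph of chromatic number $n$, for instance $K_n$ realized not inside one hyperedge but by a design: cover all $\binom{n}{2}$ pairs by $r$-subsets of an $n$-set (a covering design), then order these $r$-subsets $e_1, \dots, e_k$ so that each $e_{j+1}$ meets $e_1 \cup \cdots \cup e_j$; this is a connected constructive process. It need not be minimally connected, but by Theorem \ref{span} it contains a spanning minimally connected subhypergraph $M$; the catch is that removing hyperedges may destroy some pairs and hence lower the strong chromatic number.

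Because of that catch, the construction must be \emph{intrinsically} minimally connected while still forcing $\chi_s \ge n$. The resolution I would present: attach the clique-forcing gadget as a pendant that cannot be pruned. Take the $n$-set $\{v_1,\dots,v_n\}$ and a minimal covering of its pairs by $r$-sets; to each such $r$-set $e$ adjoin its own private new vertex? No — that changes uniformity. Rather, observe that we do not need $\chi_s(M)$ large on all of $M$: we only need $\chi_s(H) \ge n$, and $\chi_s$ is monotone under taking subhypergraphs on the same vertex set only in one direction. The honest plan, then, is: (1) build a minimally connected $H$ explicitly by a constructive process in which, after some connected minimally-connected ``backbone,'' we add a final batch of hyperedges forming a copy of $K_n$'s pair-cover in such a way that \emph{each} such hyperedge still carries a degree-one vertex guaranteeing minimal connectivity — achievable when $n \le $ (size budget) by inflating: replace each pair-cover hyperedge of size $r$ with one that uses $r-1$ ``content'' vertices from $\{v_1,\dots,v_n\}$ plus $1$ fresh pendant vertex, so it has a degree-$1$ vertex, hence is never removable, so $H$ stays minimally connected and its strong graph still contains $K_n$. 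This needs $r - 1 \ge 2$, i.e. $r \ge 3$, matching the hypothesis, and one checks a covering of $\binom{n}{2}$ by $(r-1)$-subsets exists for all $n \ge r-1$. (2) For the upper bound $\chi_s(H) = n$ exactly, I would start from a strong-$n$-coloring of the clique part and extend greedily over the minimally connected backbone and pendants, using that each step adds a vertex of degree $1$ (hence at most $r - 1 \le n - 1$ strong-neighbors, leaving a free color), so the coloring extends; the backbone should itself be chosen with $\chi_s \le n$, e.g. an $r$-uniform tree ($\chi_s = r \le n$). The one step I expect to be fussy is verifying that the whole thing can be linearly ordered into a single connected minimally-connected constructive process (backbone first, then inflated pair-cover hyperedges, each meeting the previous union), and bookkeeping the order $n$ of the final hypergraph versus the order named in the theorem — but since the theorem only asserts \emph{existence} of such a hypergraph for each $n \ge r$, there is freedom to let the vertex count be whatever the construction yields as long as it is finite, so this is routine.
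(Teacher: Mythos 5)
Your final construction---cover all pairs of $\{v_1,\dots,v_n\}$ by $(r-1)$-subsets and pad each block with one fresh pendant vertex to make an $r$-set---is essentially the paper's proof, which pads each edge $ab$ of $K_n$ with $r-2$ fresh degree-one vertices: in both versions the lower bound $\chi_s\ge n$ comes from the copy of $K_n$ sitting inside the adjacency structure, the upper bound $\chi_s\le n$ comes from the hypothesis $n\ge r$ (distinct colors are available for the padding vertices inside each hyperedge), and minimal connectivity comes from the private degree-one vertex carried by every hyperedge. So the core of your plan is sound and coincides with the paper's argument up to the cosmetic choice of how many content vertices versus pendants each hyperedge carries.

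The one genuine flaw is the ``backbone.'' In step (1) you assert that the backbone-plus-inflated-cover hypergraph ``stays minimally connected,'' but you only verify non-removability of the cover hyperedges. If the backbone is, as your wording suggests, a minimally connected piece (say an $r$-uniform tree) placed on or among the content vertices to supply connectivity, then once the cover hyperedges are added each backbone hyperedge generally becomes removable: the cover already puts every pair of content vertices in a common hyperedge, and every cover pendant remains attached to its own hyperedge, so deleting a backbone hyperedge that has no private degree-one vertex in the whole hypergraph leaves $H$ connected, and $H$ fails to be minimally connected. The repair is simply to drop the backbone: the inflated cover by itself is connected (any two content vertices lie in a common hyperedge, and each pendant lies in its hyperedge) and minimally connected (each hyperedge owns a degree-one vertex whose isolation follows from that hyperedge's removal), and your coloring extension---distinct colors on the $r-1$ content vertices inherited from a proper coloring of $K_n$, plus a spare color for the pendant since $n\ge r$---completes the upper bound. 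In particular the step you flag as fussy, ordering everything into a single minimally connected constructive process, is unnecessary: minimal connectivity can be verified directly from the definition, exactly as the paper does.
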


\begin{proof}
We begin with a complete graph $K_n$ of order $n$, which has size $m=\frac{n(n-1)}{2}$ and chromatic number $\chi (K_n)=n$.   From this graph, we form an $r$-uniform hypergraph $H_n^{(r)}$ that is minimally connected by replacing each edge $ab$ in $K_n$ with an $r$-uniform hyperedge $e_i =abx^i_1x^i_2\cdots x^i_{r-2}$, where $x^i_j$ are new vertices (with $1\le i\le m$ and $1\le j\le r-2$) that all have degree one.  The hypergraph $H_n^{(r)}$ is minimally connected since the removal of $e_i$ leaves each $x^i_j$ disconnected from the rest of the hypergraph.  Also, $H_n^{(r)}$ requires at least $n$ colors in any proper coloring since adjacent vertices in $K_n$ are still adjacent in $H_n^{(r)}$.  Hence, $\chi _s(H_n^{(r)})\ge n$.  On the other hand, since $n\ge r$, there are enough colors from the original proper coloring of $K_n$ to color the vertices $x^i_1, x^i_2, \dots , x^i_{r-2}$ distinct from one another.  Thus, $\chi_s (H_n^{(r)})\le n$, completing the proof.\end{proof}

The construction in the above proof provides, for any natural number $n$, a means of producing a minimally connected $r$-uniform hypergraph with strong chromatic number at least $n$.  As an example, consider Figure \ref{strongchrom}.  In this figure, $H_4^{(3)}$ is constructed from $K_4$.  The hypergraph $H_4^{(3)}$ contains six hyperedges, the deletion of any hyperedge results in a disconnected hypergraph containing an isolated vertex, and $\chi _s (H_4^{(3)})=4$.

\begin{figure}[h!]
\centerline{
{\includegraphics[width=0.7\textwidth]{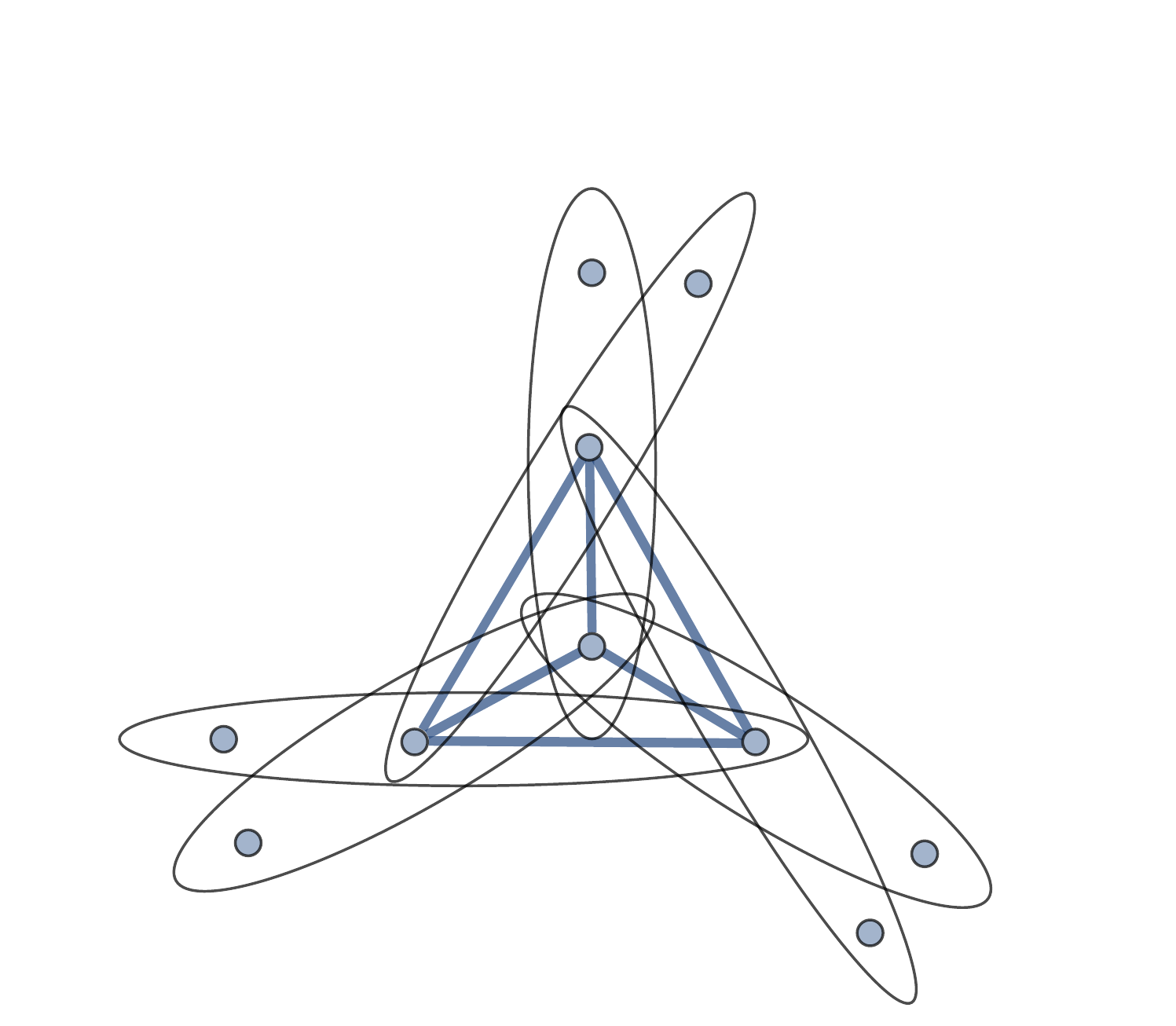}}}\caption{Using a $K_4$ to construct a minimally connected $3$-uniform hypergraph $H_4^{(3)}$ satisfying $\chi _s(H_4^{(3)})=4$.}\label{strongchrom}
\end{figure} 

\begin{theorem}
Let $H$ be a minimally connected $r$-uniform hypergraph with $r\ge 3$.  If $x$ and $y$ are two distinct vertices in $H$ that receive the same color in some strongly proper vertex coloring of $H$, then there exists some hyperedge whose removal puts $x$ and $y$ in different components.
\end{theorem}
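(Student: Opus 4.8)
The plan is to first strip away the coloring language and then argue by induction on the number of hyperedges, peeling off the hyperedges of a minimally connected constructive process one at a time.

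The coloring hypothesis is non-adjacency in disguise. If $x$ and $y$ share a color in a strongly proper coloring then no hyperedge contains both of them; conversely, if no hyperedge contains both, then the coloring that assigns $x$ and $y$ one common fresh color and every other vertex its own private color is strongly proper. So it suffices to prove: if $H$ is a minimally connected $r$-uniform hypergraph and $x \neq y$ are vertices lying in no common hyperedge, then some hyperedge $e$ has $x$ and $y$ in different components of $H - e$.

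I would induct on the size $k$ of $H$. When $k = 1$ every pair of vertices is adjacent, so the statement is vacuous. For the inductive step, Theorem \ref{equiv} lets us fix a minimally connected constructive process $H_1 \subseteq \dots \subseteq H_k = H$; let $e = e_k$ be the hyperedge added last. As observed just after Theorem \ref{equiv}, $e$ carries at least one new vertex, and $H - e$ is precisely $H_{k-1}$ together with these new vertices of $e$ sitting as isolated points; since the process is a minimally connected one, $H_{k-1}$ is itself minimally connected, of size $k - 1$. If $x$ or $y$ is one of the new vertices of $e$, then it is isolated in $H - e$ while the other lies in a different component, so $e$ already separates them. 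Otherwise $x, y \in V(H_{k-1})$, they remain non-adjacent there, and the inductive hypothesis furnishes a hyperedge $f \in E(H_{k-1})$ whose removal separates $x$ from $y$ in $H_{k-1}$; write $A \ni x$ and $B \ni y$ for the two sides in $H_{k-1} - f$.

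The final step is the one I expect to be the main obstacle: promoting $f$, or some substitute for it, to a hyperedge that separates $x$ from $y$ in all of $H$, not merely in $H_{k-1}$. The hyperedge $f$ still works whenever the attachment vertices $V(H_{k-1}) \cap e$ all lie inside a single component of $H_{k-1} - f$, because then re-inserting $e$ cannot merge $A$ with $B$. The genuine difficulty is the configuration in which $e$ has an attachment vertex in $A$ and another in $B$: re-inserting $e$ then reconnects $x$ with $y$, so $f$ is no longer a cut, and $x$ and $y$ come to lie on a common Berge cycle, one arc running through $f$ and the other through $e$. To deal with this I would try to be less cavalier about which constructive process to use, favoring an order in which $e_k$ attaches to the rest at a single vertex (a \emph{pendant} hyperedge) and treating separately the hypergraphs that admit no such hyperedge; alternatively I would attempt a direct argument, taking a Berge path from $x$ to $y$ with the fewest hyperedges and trying to locate one of its hyperedges that lies on every Berge path from $x$ to $y$. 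Determining precisely when such a universally used hyperedge exists is, I believe, the heart of the matter, and it is the point where the tightness bookkeeping of Section \ref{prop} stops helping and one needs a genuinely new handle on the Berge-cycle structure of minimally connected hypergraphs.
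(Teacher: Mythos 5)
Your reduction of the coloring hypothesis to non-adjacency is correct, and your overall strategy is essentially the paper's: the paper fixes a constructive process $e_1,\dots ,e_k$ and removes the hyperedges in reverse order, asserting that at every stage $x$ and $y$ remain in one component, until both lie in $e_1$, contradicting properness. The step you could not complete --- promoting a hyperedge $f$ that separates $x$ from $y$ in $H_{k-1}$ to one that separates them in $H$ when $e_k$ attaches to both sides of $H_{k-1}-f$ --- is exactly the step the paper's proof passes over: the hypothesis only says that deleting a single hyperedge from $H$ leaves $x$ and $y$ together, and that does not imply the analogous statement for the intermediate hypergraphs $H_i$, which is what the reverse-peeling argument actually needs. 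So as a proof your proposal is incomplete, but you have located the crux precisely, and the paper's argument is unjustified at the same point rather than in possession of a missing idea you failed to find.

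Moreover, the configuration you flagged as the ``genuine difficulty'' really occurs, so the gap cannot be patched without changing the statement. Take $r=3$ and hyperedges $\{x,a_1,u\}$, $\{u,a_2,y\}$, $\{x,a_3,w\}$, $\{w,a_4,y\}$, where $a_1,a_2,a_3,a_4$ have degree one. This hypergraph is minimally connected (every hyperedge contains a pendant vertex), and the coloring sending $x,y$ to color $1$, $u,w$ to color $2$, and each $a_i$ to color $3$ is strongly proper, so $x$ and $y$ share a color; yet deleting any single hyperedge merely isolates one $a_i$ and leaves $x$ and $y$ connected through the other two-edge Berge path. Thus two hyperedge-disjoint Berge paths between same-colored vertices can coexist with minimal connectivity, which is precisely the case your induction cannot close and the case in which the theorem's conclusion fails. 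Your instinct that the tightness bookkeeping gives no handle here is sound; the correct moral is that the statement needs an additional hypothesis (or a weaker conclusion), not a cleverer final step.
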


\begin{proof}
Suppose that $x$ and $y$ receive the same color in some proper vertex coloring of $H$, but there does not exist any hyperedge whose removal puts $x$ and $y$ in different components.  Let $\mathcal{P}$ be a constructive process with hyperedges $e_1$, $e_2,$ \dots , $e_k$.  Then removing $e_k$ does not result in $x$ and $y$ being in different components.  Removing the hyperedges in reverse order, the removal of $e_i$ does not result in $x$ and $y$ being in different components for any $2\le i \le k$.  Finally, we are left with $x$ and $y$ both in $e_1$.  So, they cannot receive the same color, giving us a contradiction.  
\end{proof}

\begin{theorem}
For $r\ge 3$, let $H$ be a minimally connected $r$-uniform hypergraph of size $k>1$ with minimally connected constructive process $\mathcal{P}$ having tightness sequence $t_1$, $t_2$, \dots , $t_k$.  Then $$\chi_s (H)\le r+t_H-t_1-k+2.$$ \label{chrombound}
\end{theorem}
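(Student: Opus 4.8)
The plan is to construct a strong proper coloring of $H$ by following a minimally connected constructive process $\mathcal{P}$ with hyperedges $e_1, e_2, \dots, e_k$ and tightness sequence $t_1, \dots, t_{k-1}$, maintaining at every stage the invariant that each hyperedge built so far is \emph{rainbow} (its $r$ vertices receive pairwise distinct colors). Since a vertex coloring is strongly proper exactly when every hyperedge is rainbow, the coloring produced when the process terminates will be strongly proper, and the theorem reduces to bounding the number of colors it uses. I would begin by giving the $r$ vertices of $e_1$ the colors $1, \dots, r$; thereafter the only colors ever introduced are \emph{fresh} colors, never previously used, and the whole argument is a count of how many fresh colors are needed.

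When $e_{i+1}$ is added, it meets the current hypergraph $H_i$ in a set $A_i = e_{i+1} \cap V(H_i)$ of exactly $t_i$ vertices and brings in $r - t_i$ new ones. The step has two parts. First, I re-rainbow $A_i$: if its vertices currently carry $d_i$ distinct colors, I recolor $t_i - d_i$ of them (one from each color class of size $\ge 2$) using $t_i - d_i$ fresh colors, so that $A_i$ becomes rainbow, using $t_i$ colors in total. The key point is that recoloring a vertex with a fresh color cannot spoil the rainbow property of any hyperedge already present, since a fresh color differs from every color in use; so the invariant survives for all earlier hyperedges. Second, I color the $r - t_i$ new vertices with $r - t_i$ colors taken from the existing palette and distinct from the $t_i$ colors now on $A_i$; this is always possible because the palette has size at least $r$. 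After these two moves $e_{i+1}$ is rainbow and no fresh color was spent on its new vertices. A new vertex of $e_{i+1}$ may share a color with some older vertex lying outside $e_{i+1}$, but this is harmless: if such a pair later lands in a common hyperedge $e_m$, that collision will be removed when $e_m$ is processed, because its old-vertex set $A_{m-1}$ is explicitly re-rainbowed at that step.

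It remains to tally the fresh colors. Since $A_i \neq \emptyset$ we have $d_i \ge 1$, so step $i$ costs at most $t_i - 1$ fresh colors; and for $i = 1$ the set $A_1 \subseteq e_1$ is already rainbow, so $d_1 = t_1$ and step $1$ is free. Hence the total number of colors used is at most $r + \sum_{i=2}^{k-1}(t_i - 1) = r + (t_H - t_1) - (k - 2) = r + t_H - t_1 - k + 2$, as claimed (for $k = 1, 2$ the sum is empty and the bound reads $\chi_s(H) \le r$, which is immediate from the $K_r$ induced by $e_1$).

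The step I expect to be the genuine obstacle is not any computation but rather seeing why a bounded number of fresh colors suffices at all: a first attempt, in which each new vertex must pre-emptively avoid the colors of every vertex it will ever be adjacent to, forces an uncontrolled number of fresh colors, since a single vertex can lie in many hyperedges. The resolution is the ``repair on demand'' strategy above, together with the observation that re-rainbowing with fresh colors is always safe for the hyperedges already built; once that is in place the bookkeeping is routine, and the looseness of the resulting bound (it is far from sharp already for loose cycles) is exactly the price of always grabbing a fresh color instead of reusing an available old one.
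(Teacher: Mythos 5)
Your proposal is correct and is essentially the paper's argument: the paper proves the bound by induction on $k$ along the constructive process, recoloring at most $t_{k-1}-1$ vertices of $V(H')\cap e_k$ with fresh colors when the last hyperedge is restored, which is exactly your ``repair on demand'' step unrolled edge by edge. Your write-up just makes explicit the details the paper leaves implicit (fresh colors cannot break earlier hyperedges, and the new degree-one vertices reuse old colors since $e_1$ keeps $r$ colors in play), so no substantive difference.
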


\begin{proof}
We proceed by induction on $k>1$.  When $k=2$, $t_H=t_1$, and it is easily seen that $\chi _s(H)=r$.
Now, suppose that the result is true for all minimally connected $r$-uniform hypergraphs of size $k-1$ and let $H$ be a minimally connected $r$-uniform hypergraph of size $k$.  Let $H'$ be the minimally connected hypergraph formed by removing $e_k$ (and all isolated vertices) from $H$.  Then by the inductive hypothesis, $$\chi_s(H')=r+\mathop{\sum}\limits_{i=1}^{k-2} t_i -t_1-(k-1)+2.$$  When adding in $e_k$, the only vertices that may need new colors are those that are the same color in $V(H')\cap e_k$.  So, at most, $t_{k-1}-1$ new colors are needed, from which it follows that $$\chi_s(H)\le r+\mathop{\sum}\limits_{i=1}^{k-1} t_i -t_1-k+2=r+t_H-t_1-k+2,$$ completing the proof of the theorem.
\end{proof}

\noindent Observe that we can optimize the bound in Theorem \ref{chrombound} by picking a constructive process in which $t_1$ is maximal. 

\section{Some Hyperedge Coloring Results}\label{C}

In this section we will examine colorings of the hyperedges of complete $r$-uniform hypergraphs.  A {\it $t$-coloring} of an $r$-uniform hypergraph $H$ is a function $$c: E(H)\longrightarrow \{1, 2, \dots, t\}$$ that assigns colors to the hyperedges of $H$.  We do not assume that such a coloring is proper, nor do we assume that $c$ is surjective.  A subgraph $H'$ of $H$ is called {\it rainbow} if all of the hyperedges in $H'$ receive different colors.  We call $H$ {\it rainbow connected with respect to $c$} if for every pair of distinct vertices $u,v\in V(H)$, there exists a rainbow Berge path connecting $u$ to $v$.  Observe that every connected hypergraph $H$ is rainbow connected with respect to some coloring as one could always choose the coloring in which every hyperedge of $H$ receives a different color.  The {\it rainbow connected number} $rc(H)$ of a connected $r$-uniform hypergraph $H$ is defined to be the minimal number of colors $t$ such that $H$ is rainbow connected with respect to some $t$-coloring.

Since every connected $r$-uniform hypergraph $H$ is spanned by a minimally connected subhypergraph $M$, the edges of $M$ can each receive a different color, providing a rainbow Berge path between every distinct pair of vertices.  Hence, from Theorem \ref{size}, it follows that $$rc(H)\le n-r+1.$$

Let $K_n^{(r)}$ denote the complete $r$-uniform hypergraph of order $n$.  A {\it Gallai $t$-coloring} of $K_n^{(r)}$ is a $t$-coloring of the hyperedges in $K_n^{(r)}$ such that no rainbow $K_{r+1}^{(r)}$-subhypergraph exists.  Thus, when $t\le r+1$ all $t$-colorings are Gallai $t$-colorings. 
The following theorem is a nice generalization of a result concerning Gallai colorings of graphs from Gy\'arf\'as and Simonyi \cite{GS}.

\begin{theorem}
Let $r\ge 3$.  Then every Gallai $(r+1)$-coloring of $K_n^{(r)}$ contains a color that spans a connected $r$-uniform hypergraph using all $n$ vertices.\label{Gall}
\end{theorem}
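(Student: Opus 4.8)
The plan is to adapt the classical argument of Gy\'arf\'as and Simonyi for graphs, where the key fact is that in a Gallai coloring of a complete graph, some color class is connected and spanning. For the hypergraph version I would argue by contradiction: suppose that in a Gallai $(r+1)$-coloring $c$ of $K_n^{(r)}$, no single color spans a connected subhypergraph on all $n$ vertices. Since $t = r+1$, the Gallai condition says there is no rainbow copy of $K_{r+1}^{(r)}$, i.e.\ among any $r+1$ vertices, the $\binom{r+1}{r} = r+1$ hyperedges they span do \emph{not} use all $r+1$ colors --- so at most $r$ distinct colors appear on the hyperedges inside any $(r+1)$-set. This is the combinatorial constraint I would exploit.

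First I would set up, for each color $i \in \{1,\dots,r+1\}$, the partition of $V = V(K_n^{(r)})$ into the connected components of the subhypergraph $H_i$ consisting of all hyperedges colored $i$ (together with all vertices); by assumption each $H_i$ has at least two components. Next I would choose a set $S$ of $r+1$ vertices cleverly: I want $S$ to be a "transversal" that witnesses all $r+1$ colors among its internal hyperedges, contradicting the Gallai condition. The mechanism for building such an $S$ is the main step. I would proceed greedily/inductively: start with a hyperedge $e_1$ of some color, say color $1$; since $H_2$ is disconnected, there is a vertex $v$ in a different $H_2$-component from the vertices of $e_1$, and swapping $v$ into $e_1$ produces a new $r$-set whose hyperedge to the "outside" can be forced to have color $2$ --- and so on, accumulating one new color with each vertex exchange. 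After $r+1$ steps I would have an $(r+1)$-set whose spanned hyperedges realize all $r+1$ colors, which is the forbidden rainbow $K_{r+1}^{(r)}$.

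The main obstacle I anticipate is making the greedy color-accumulation step precise: when I swap a vertex into the current set, I need to \emph{guarantee} that a genuinely new color appears among the internal hyperedges, and that previously-secured colors are not destroyed. In the graph case ($r=2$) this is clean because an edge has a single color and components behave simply; for $r$-uniform hyperedges the internal structure of an $(r+1)$-set is richer (it has $r+1$ hyperedges, overlapping heavily), so I would likely need an auxiliary lemma: \emph{if every color class $H_i$ is disconnected, then one can find $r+1$ vertices, no two of which lie in a common component of some common color, forcing all $r+1$ colors to appear internally.} Proving that lemma --- probably by a Hall-type / iterated-refinement argument on the component partitions of the $r+1$ colors, using that each partition is nontrivial --- is where the real work lies. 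An alternative route, which I would keep in reserve if the direct approach stalls, is induction on $n$ (contract a component of one color to a single vertex and apply the Gallai property to the quotient), mimicking the standard reduction in Gallai-coloring proofs; the difficulty there is checking that the quotient of $K_n^{(r)}$ still carries a Gallai coloring of a complete $r$-uniform hypergraph, which is not automatic and may require restricting attention to a suitable sub-structure.
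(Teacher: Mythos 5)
Your proposal does not contain a complete proof, and the gap is at the heart of the matter. The ``auxiliary lemma'' you defer to (\emph{if every color class is disconnected, then some $(r+1)$-set of vertices realizes all $r+1$ colors internally}) is nothing but the contrapositive of the theorem itself, so deferring it is deferring the whole statement. Moreover, the mechanism you sketch for proving it is not sound as stated: if you choose $r+1$ vertices no two of which lie in a common component of color $i$, then \emph{no} hyperedge inside that set can have color $i$ at all (a color-$i$ hyperedge would place its $r$ vertices in one component of $H_i$); demanding this simultaneously for all colors is impossible, since every internal hyperedge must have some color. So the ``transversal of the component partitions'' idea forces colors \emph{out} of the $(r+1)$-set rather than into it, and the greedy swap step (``the hyperedge to the outside can be forced to have color $2$'') is never justified --- you have no control over the colors of the new internal hyperedges created by an exchange, and previously secured colors can indeed be destroyed. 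Your fallback (contraction and induction) is also left unresolved, and as you note it is unclear that the quotient is again a Gallai-colored complete $r$-uniform hypergraph.

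For comparison, the paper's argument is a short direct induction on $n$ that avoids all of this. Base case $n=r+1$: since the coloring is Gallai, the $r+1$ hyperedges of $K_{r+1}^{(r)}$ cannot all receive distinct colors, so two share a color; these two hyperedges meet in $r-1$ vertices and together cover all $r+1$ vertices, so that color spans. Inductive step: in $K_{n+1}^{(r)}$, delete each vertex $x_i$ in turn to obtain $n+1$ vertex-deleted complete subhypergraphs, each inheriting a Gallai $(r+1)$-coloring; by induction each has a spanning color, and since $n+1\ge r+2 > r+1$, the pigeonhole principle gives two deleted subhypergraphs spanned by the same color, which together force that color to span all of $K_{n+1}^{(r)}$. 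If you want to salvage your approach, you would need to actually prove your key lemma, and the cleanest way to do so is essentially to rediscover this deletion-plus-pigeonhole induction.
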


\begin{proof}
We proceed by induction on $n$.  When $n=r+1$, at least two hyperedges in $K_{r+1}^{(r)}$ are the same color by the definition of a Gallai coloring, and hence, span the complete hypergraph.  Now assume the theorem is true for $n\ge r+1$ and consider a Gallai coloring of $K_{n+1}^{(r)}$.  Let $\{x_1 , x_2 , \dots, x_{n+1} \}$ be the vertices of this $K_{n+1}^{(r)}$ and define $k_i$ to be the subhypergraph formed by removing $x_i$ from the $K_{n+1}^{(r)}$, for each $i\in \{ 1, 2, \dots , n+1 \}$.  Thus, we have a total of $n+1$ complete Gallai $(r+1)$-colored hypergraphs where $n+1\ge r+2$.  By the inductive hypothesis, each $k_i$ contains a spanning color, and by the pigeonhole principle, at least two of the $k_i$s are spanned by the same color.  This color necessarily spans all of $K_{n+1}^{(r)}$.  
\end{proof}
\noindent Combining Theorem \ref{span} with Theorem \ref{Gall}, it follows that every Gallai $(r+1)$-coloring of $K_n^{(r)}$ contains a monochromatic minimally connected spanning subhypergraph. 

We now turn our attention to the colorings of complete hypergraphs involving only two colors, but first, we must recall the definition of diameter.  Here, we only assume $r\ge 2$.  If $u$ and $v$ are any two distinct vertices in a connected $r$-uniform hypergraph $H$, then the {\it distance from $u$ to $v$,} denoted $d(u,v)$ is the minimum number of hyperedges contained in any Berge path connecting $u$ to $v$.  The {\it diameter} $diam(H)$ is then defined by $$diam(H):=\max\{ d(u,v)\ | \ u,v\in V(H)\}.$$  Results involving $2$ colorings in the setting of $2$-graphs are plentiful. In particular, it has been noted that  Erd\H{o}s and Rado claimed that for any graph $G$, either $G$ or $\overline{G}$ is connected \cite{BG}.  Rephrasing this result in terms of colorings, every $2$-coloring of $K_n$ contains a color that spans all $n$ vertices.  More precisely, it was shown by Bialostocki, Dierker, and Voxman \cite{BDV} that if a graph $G$ is not connected, then $\overline{G}$ is connected and $diam(\overline{G})\le 2$.  When $r\ge3$, we offer a generalization of this fact in Theorem \ref{Thm17}. But first we derive some helpful results. 

\begin{theorem}\label{Thm15}
If $r\ge 3$ and $H$ is a disconnected $r$-uniform hypergraph of order $n\ge r$, then $\overline{H}$ is connected and every subset $\{ x_1, x_2, \dots , x_{r-1}\}$ of distinct vertices in $V(H)$ is contained in some hyperedge in $\overline{H}$.
\end{theorem}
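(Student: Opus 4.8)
The plan is to exploit the structural meaning of disconnectedness: a disconnected hypergraph admits a partition of its vertex set into two nonempty classes with no hyperedge crossing between them, and then to argue directly that \emph{every} $(r-1)$-subset of vertices can be completed to a non-edge of $H$, which is the same as an edge of $\overline{H}$.

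First I would fix a partition $V(H) = A \sqcup B$ with $A \neq \emptyset$, $B \neq \emptyset$, such that every hyperedge of $H$ is contained entirely in $A$ or entirely in $B$; such a partition exists precisely because $H$ is disconnected (group the components into one class $A$ and the union of the rest into $B$). Now take an arbitrary set $S = \{x_1, \dots, x_{r-1}\}$ of distinct vertices and split into two cases. If $S$ already meets both $A$ and $B$, then since $n \geq r$ there is a vertex $v \in V(H) \setminus S$, and $S \cup \{v\}$ is an $r$-set still meeting both parts, hence not a hyperedge of $H$, hence a hyperedge of $\overline{H}$. If instead $S$ lies in one class, say $S \subseteq A$, then pick any $v \in B$ (nonempty); then $S \cup \{v\}$ again meets both parts, so it is a non-edge of $H$ and therefore an edge of $\overline{H}$. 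In both cases $S$ extends to a hyperedge of $\overline{H}$, which is the second assertion of the theorem.

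Connectedness of $\overline{H}$ then follows immediately: since $r \geq 3$ we have $r - 1 \geq 2$, so given any two distinct vertices $u, v \in V(H)$ we may choose the $(r-1)$-subset $S$ to contain both $u$ and $v$; by the previous step $S$ lies inside some hyperedge of $\overline{H}$, so $u$ and $v$ share a hyperedge and are joined by a Berge path of length one. Hence $\overline{H}$ is connected (in fact every pair of vertices lies in a common hyperedge, so $\mathrm{diam}(\overline{H}) = 1$). The only point requiring any care is the cardinality bookkeeping — one needs $n \geq r$ so that $V(H) \setminus S$ is nonempty in the first case, and one needs both classes nonempty, which is exactly the hypothesis that $H$ is disconnected — so there is no real obstacle; the argument is essentially a counting observation about which $r$-sets are forced to avoid $E(H)$.
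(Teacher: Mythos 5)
Your proof is correct and follows essentially the same route as the paper's: split into cases according to whether the $(r-1)$-set lies in a single component or meets two, and complete it with a vertex forcing the $r$-set to cross components, hence to be a non-edge of $H$ and an edge of $\overline{H}$. Your phrasing via the bipartition $A \sqcup B$ and your explicit use of $r-1 \ge 2$ to conclude connectedness (indeed $diam(\overline{H})=1$) only make explicit what the paper leaves implicit.
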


\begin{proof}
Suppose that $H$ is a disconnected $r$-uniform hypergraph of order $n\ge r$.  Let $S=\{ x_1, x_2, \dots , x_{r-1}\}$ be a subset distinct vertices in $V(H)$.  We consider two cases. \\
\underline{Case 1:}  Assume that two distinct vertices $x_i$ and $x_j$ are in different connected components in $H$.  Then if $y$ is any vertex not in $S$, then $x_1x_2\cdots x_{r-1}y\in E(\overline{H})$.\\
\underline{Case 2:} Assume that $C_1$ and $C_2$ are distinct connected components in $H$ with $x_1,x_2,\dots , x_{r-1} \in C_1$.   If $y$ is any vertex in $C_2$, then $x_1x_2\cdots x_{r-1}y\in E(\overline{H})$.\\
In both cases, we find that $\overline{H}$ is connected and every subset $\{x_1, x_2,\dots , x_{r-1}\}$ of vertices is contained in some hyperedge in $\overline{H}$.
\end{proof}

From Theorem \ref{Thm15}, observe that whenever $H$ is disconnected, it follows that $\overline{H}$ is connected and $diam(\overline{H})=1$.  Furthermore, using this theorem  with Theorem \ref{span}, we obtain the following corollary.

\begin{corollary}\label{cor1} Let $r\ge 3$.
In every $2$-coloring of the hyperedges in $K_n^{(r)}$, there exists a monochromatic spanning minimally connected hypergraph.
\end{corollary}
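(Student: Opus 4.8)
The plan is to combine Theorem \ref{Thm15} with Theorem \ref{span} in the obvious way. Given a $2$-coloring of the hyperedges of $K_n^{(r)}$, partition the hyperedge set into the color class $E_1$ of hyperedges receiving color $1$ and the color class $E_2$ of hyperedges receiving color $2$. Let $H_1$ be the $r$-uniform hypergraph on vertex set $V(K_n^{(r)})$ with hyperedge set $E_1$, and observe that $H_2$, defined analogously with hyperedge set $E_2$, is precisely $\overline{H_1}$ (the $r$-uniform edge complement) since every $r$-subset of the vertices is a hyperedge of $K_n^{(r)}$ and hence receives exactly one of the two colors. If $n < r$ the statement is vacuous, and if $n=r$ there is a single hyperedge which is trivially monochromatic and spanning, so we may assume $n \ge r$, indeed $n \ge r+1$ for the interesting case, but $n \ge r$ suffices.

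First I would dispose of the case where $H_1$ is connected: then $H_1$ is a connected $r$-uniform hypergraph on all $n$ vertices, so by Theorem \ref{span} it contains a spanning minimally connected subhypergraph, all of whose hyperedges lie in $E_1$ and are therefore color $1$. This gives a monochromatic spanning minimally connected hypergraph. Otherwise $H_1$ is disconnected, and since $n \ge r$, Theorem \ref{Thm15} applies and tells us $\overline{H_1} = H_2$ is connected. Now apply Theorem \ref{span} to $H_2$: it contains a spanning minimally connected subhypergraph, whose hyperedges all receive color $2$. Either way we produce the desired monochromatic spanning minimally connected hypergraph.

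There is really no substantial obstacle here; the content is entirely in the two cited theorems, and the corollary is just the bookkeeping that glues them together. The one point requiring a moment's care is the observation that the two color classes of a $2$-coloring of $K_n^{(r)}$ are genuine $r$-uniform hypergraphs that are edge-complements of one another on the common vertex set $V(K_n^{(r)})$, so that Theorem \ref{Thm15}'s hypothesis (a disconnected $r$-uniform hypergraph of order $n \ge r$) is met by whichever color class happens to be disconnected. After that, the only thing to note is that a minimally connected \emph{subhypergraph} of $H_i$ uses only hyperedges of $H_i$, hence only hyperedges of color $i$, so it is monochromatic; and it spans $V(K_n^{(r)})$ because $H_i$ does.

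Here is the proof in full:

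\begin{proof}
The statement is vacuous if $n<r$, so assume $n\ge r$. Fix a $2$-coloring $c\colon E(K_n^{(r)})\to\{1,2\}$ and for $i\in\{1,2\}$ let $H_i$ be the $r$-uniform hypergraph with $V(H_i)=V(K_n^{(r)})$ and $E(H_i)=\{e\in E(K_n^{(r)})\ :\ c(e)=i\}$. Since every $r$-subset of $V(K_n^{(r)})$ is a hyperedge of $K_n^{(r)}$ receiving exactly one color, we have $H_2=\overline{H_1}$. If $H_1$ is connected, then by Theorem \ref{span} it contains a spanning minimally connected subhypergraph $M$; every hyperedge of $M$ lies in $E(H_1)$ and hence has color $1$, so $M$ is a monochromatic spanning minimally connected hypergraph. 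If $H_1$ is disconnected, then by Theorem \ref{Thm15} the hypergraph $\overline{H_1}=H_2$ is connected, and Theorem \ref{span} again yields a spanning minimally connected subhypergraph $M$ of $H_2$, all of whose hyperedges have color $2$. In either case the desired monochromatic spanning minimally connected hypergraph exists.
\end{proof}
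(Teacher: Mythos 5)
Your proof is correct and takes essentially the same route as the paper, which obtains the corollary exactly by combining Theorem \ref{Thm15} (if one color class is disconnected, its complement, the other color class, is connected) with Theorem \ref{span} applied to whichever class is connected. Your extra bookkeeping about the color classes being $r$-uniform edge-complements and the small-$n$ cases is fine and only makes explicit what the paper leaves implicit.
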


\begin{theorem}\label{Thm17}
If $r\geq 3$ and $H$ is a connected $r$-uniform hypergraph of order $n\ge r$ with $diam(H)\ge 2$, then $diam(\overline{H})= 1$.
\end{theorem}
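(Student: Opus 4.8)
The plan is to show that if $H$ has diameter at least $2$, then for any two distinct vertices $u, v \in V(H)$ there is a Berge path of length $1$ between them in $\overline{H}$, i.e. a hyperedge of $\overline{H}$ containing both $u$ and $v$. Since such a hyperedge certifies $d_{\overline{H}}(u,v) = 1$, establishing this for every pair gives $diam(\overline{H}) = 1$ (noting $n \ge r$ guarantees $\overline{H}$ has at least the potential for hyperedges, and we must also check $\overline{H}$ is nonempty, which will follow from the construction).

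First I would fix distinct $u, v \in V(H)$ and seek $r-2$ additional vertices $w_1, \dots, w_{r-2}$ so that $e = \{u, v, w_1, \dots, w_{r-2}\}$ is \emph{not} a hyperedge of $H$; then $e \in E(\overline{H})$ and we are done for this pair. The key leverage is the hypothesis $diam(H) \ge 2$: this means there exist two vertices $x, y \in V(H)$ with $d_H(x,y) \ge 2$, equivalently $x$ and $y$ lie in no common hyperedge of $H$. I would split into cases according to how $u$ and $v$ relate to a witnessing non-adjacent pair. If $u$ and $v$ themselves satisfy $d_H(u,v) \ge 2$, then no hyperedge of $H$ contains both, so \emph{any} choice of $w_1, \dots, w_{r-2}$ from the remaining $n-2 \ge r-2$ vertices yields $e \in E(\overline{H})$. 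The substantive case is when $d_H(u,v) = 1$, i.e. some hyperedge of $H$ contains both $u$ and $v$; here I would use a witnessing pair $x,y$ with $d_H(x,y)\ge 2$ and try to include at least one of $x, y$ (whichever is distinct from $u$ and $v$) among the $w_i$'s. Including, say, $x$ where $d_H(x,v)\ge 2$ or $d_H(x,u)\ge 2$ forces $e$ to avoid being an edge of $H$, provided such an $x$ exists outside $\{u,v\}$.

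The main obstacle I anticipate is the bookkeeping when the diameter-$\ge 2$ witness pair $\{x,y\}$ overlaps with $\{u,v\}$, or when $n$ is barely larger than $r$ so there is little freedom in choosing the padding vertices. For instance, if $d_H(u,v) = 1$ but the only far-apart pair involves $u$ together with some $z \notin \{u,v\}$, I would take $e = \{u, v, z, \dots\}$ so that $u$ and $z$ witness non-adjacency in $H$, killing $e$ as an $H$-edge; the case $d_H(v,z)\ge 2$ is symmetric. One should also handle $n = r$ separately: then $\overline{H}$ has only the single potential hyperedge $V(H)$, and $diam(H)\ge 2$ means $V(H) \notin E(H)$ (else all vertices lie in one edge, forcing diameter $1$), so $V(H) \in E(\overline{H})$ and trivially $diam(\overline{H}) = 1$. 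Assembling these cases carefully — and confirming in each that the chosen $r$-set genuinely fails to be a hyperedge of $H$ because it contains a pair at $H$-distance $\ge 2$ — completes the argument; the only real care needed is ensuring enough distinct vertices are available and that the chosen witness pair truly lies inside the constructed $r$-set.
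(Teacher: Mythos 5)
Your reduction of the theorem to the claim that every pair of distinct vertices lies in a common hyperedge of $\overline{H}$ is the right reading of $diam(\overline{H})=1$, and your easy case (if $d_H(u,v)\ge 2$, pad $\{u,v\}$ with arbitrary vertices) together with the $n=r$ remark is sound; in fact that easy case is essentially all the paper's own proof contains, since it only exhibits a hyperedge of $\overline{H}$ through one nonadjacent pair $x,y$ and never addresses pairs that are adjacent in $H$. The case you correctly single out as substantive, $d_H(u,v)=1$, is exactly where your plan has a genuine gap. If the witnessing pair $x,y$ with $d_H(x,y)\ge 2$ is disjoint from $\{u,v\}$, then putting a single one of $x,y$ into the $r$-set $\{u,v,w_1,\dots,w_{r-2}\}$ proves nothing unless that witness is itself at distance at least $2$ from $u$ or from $v$; your caveat ``provided such an $x$ exists outside $\{u,v\}$'' is precisely the unproved step, and such a vertex need not exist. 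The natural repair is to place \emph{both} $x$ and $y$ in the $r$-set alongside $u$ and $v$, but that requires $r\ge 4$.

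For $r=3$ no bookkeeping can close this case, because the statement itself fails there: take $V=\{u,v,a,b\}$ and $E(H)=\{\{u,v,a\},\{u,v,b\}\}$. This $3$-uniform hypergraph is connected, $d_H(a,b)=2$ so $diam(H)\ge 2$, yet every $3$-subset containing both $u$ and $v$ is a hyperedge of $H$, so no hyperedge of $\overline{H}=\{\{u,a,b\},\{v,a,b\}\}$ contains both $u$ and $v$ and $diam(\overline{H})=2$, not $1$. (Taking all triples containing $\{u,v\}$ gives the same obstruction for every $n\ge 4$.) So your proposal can be completed for $r\ge 4$ by padding $\{u,v,x,y\}$ to an $r$-set, but the adjacent-pair case you flagged is a genuine obstruction when $r=3$; the conclusion that does follow for all $r\ge 3$, and the most the paper's argument actually establishes, is that every pair at distance at least $2$ in $H$ becomes adjacent in $\overline{H}$.
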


\begin{proof}
Assume that $H$ has diameter at least $2$. Then by Corollary \ref{cor1} we can assume that $H$ is connected. We may choose $x,y\in V(H)$ to be nonadjacent vertices. Since $H$ is connected and has at least $r>2$ vertices then there are $r-2$ vertices $z_1,z_2,...,z_{r-2}$ such that for some $e\in E(H)$ $\{x,z_1,z_2,...,z_{r-2}\}\subset e.$ Then the edge $e_1=\{x,z_1,z_2,...,z_{r-2},y\} \not\in E(H)$. Thus $e_1 \in E(\overline{H}).$ Therefore $x$ and $y$ are adjacent in $\overline{H}$,  proving that $diameter(\overline{H})=1.$ 
\end{proof}

\section{Conclusions and future directions}

This paper has been an investigation into hypergraph generalizations of trees.  The specific generalization which we explored (minimally connected hypergraphs) stresses the role that trees play in results concerning connectivity. While minimally connected hypergraphs work well to expand upon many theorems for $2$-graphs, the fact that the size of a minimally connected spanning tree is not necessarily determined by the parent graph shows that many open problems still exist. We conclude by listing several open problems that we deem worthy of future investigation.

\begin{enumerate}
\item Is there an algorithm that would provide, for arbitrary weights, the optimum cost minimally connected spanning subhypergraph? The answer to this question could have serious ramifications, not just for bioinformatics (where punning algorithms have already been applied), but also in realms as distinct as physics and banking.   
Algorithms for exact and approximate minimum spanning trees in hypergraphs are already known (e.g., see \cite{GaS}, \cite{GZRSRB}, and \cite{GM}).  We could ask for conditions under which a weighted hypergraph would admit an optimal cost minimally connected subhypergraph, and the running time of an optimal algorithm for finding it. Figure \ref{minimalweight} shows that a greedy approach of taking the hyperedges in order of least weight can be suboptimal. Such an approach algorithm would yield a minimally connected subhypergraph with hyperedges $cde$, $bde$, and $ade$, for a total weight of $5$, whereas the hyperedges $cde$ and $abc$ would form a minimally connected subhypergraph with a lower total weight of $4$. 
\begin{figure}[h!]
\centerline{
{\includegraphics[width=0.6\textwidth]{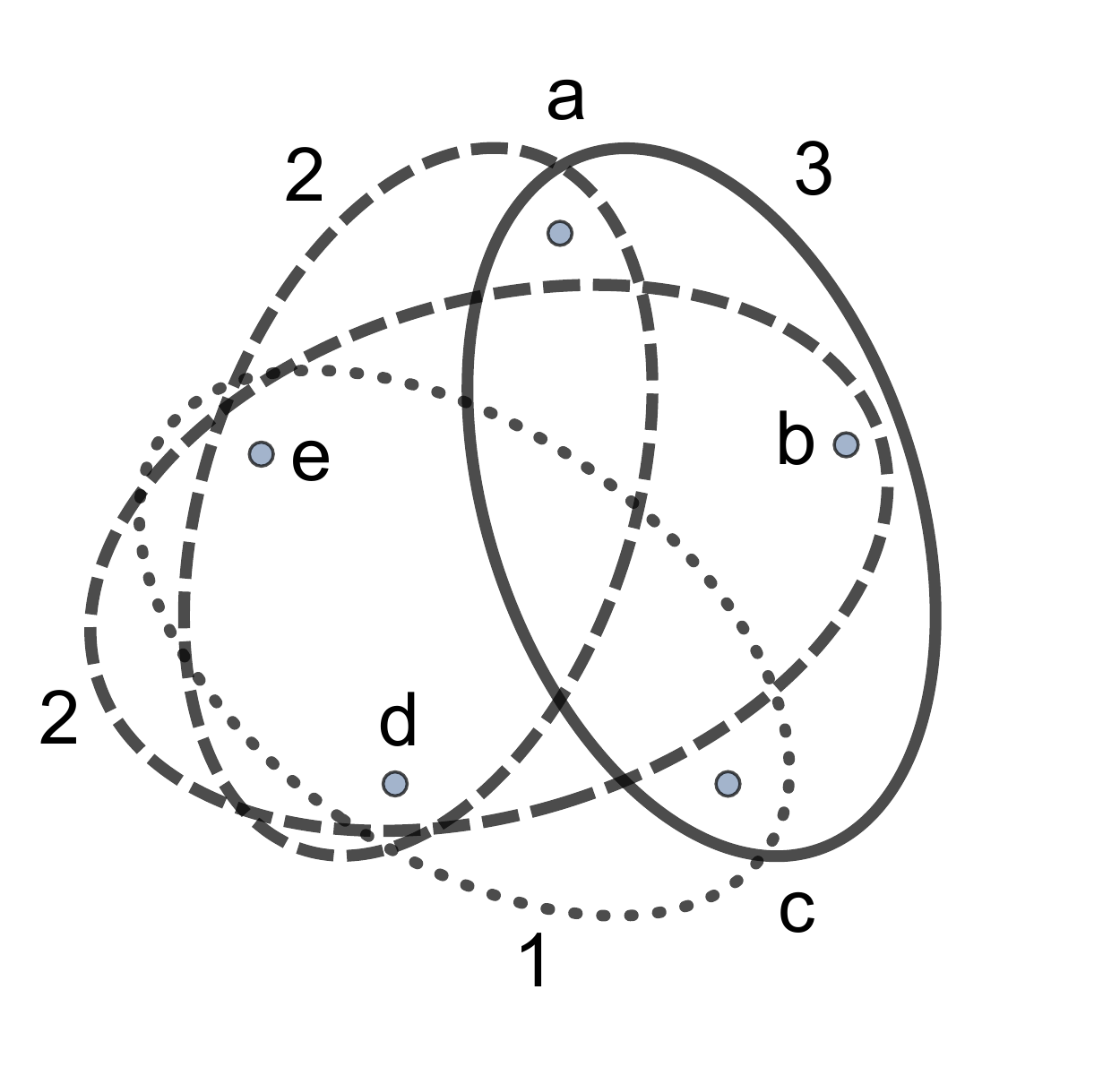}}}\caption{A weighted $3$-uniform hypergraph for which the greedy approach does not produce a minimally connected subhypergraph of minimum total edge weight.}\label{minimalweight}
\end{figure} 
\item Recall that the Ramsey number $R(H_1, H_2;r)$ of two $r$-uniform hypergraphs $H_1$ and $H_2$ is defined to be the least natural number $p$ such that every $2$-coloring of hyperedges of $K_p^{(r)}$, using say, red and blue, results in a red subhypergraph isomorphic to $H_1$ or a blue subhypergraph isomorphic to $H_2$.  A connected $r$-uniform hypergraph $H$ of order $m$ is called {\it $n$-good} if $$R(H, K_n^{(r)};r)=(m-1)\left( \ceil[\Big]{\frac{n}{r-1}}-1\right)+t(K_n^{(r)}),$$ where $\ceil{\cdot}$ is the ceiling function and $t(K_n^{(r)})$ is the minimum number of vertices in any color class of a weak vertex coloring of $H$.  The fact that this number is a lower bound for the given Ramsey number was proved in Theorem 3.1 of \cite{BP}.  So, showing that an $r$-uniform hypergraph is $n$-good follows from proving that this number is also an upper bound. In \cite{BP}, it was conjectured that all $r$-uniform trees are $n$-good and infinitely-many examples of $n$-good $3$-uniform trees were given.  It was also shown that the minimally connected $3$-uniform cycle $C_4^{(3)}$ of length $2$ and order $4$ is $4$-good, but is not $5$-good.  What are the conditions under which a minimally connected $r$-uniform hypergraph is $n$-good?  It is worth noting that if a connected $r$-uniform hypergraph is $n$-good, then so is every minimally connected spanning subhypergraph.
\item In Section \ref{chromat}, we considered the weak and strong chromatic numbers of minimally connected $r$-uniform hypergraphs, but other chromatic numbers can be considered when $r\ge 4$.  More generally, define the $k$-chromatic number $\chi _k(H)$ of an $r$-uniform hypergraph $H$ to be the minimum number of colors needed to color the vertices of $V(H)$ so that every hyperedge contains vertices using at least $k$ distinct colors.  It follows that $$\chi _w(H)=\chi _2(H) \quad \mbox{and} \quad \chi _s(H)=\chi_r(H).$$ Can one determine $\chi _k(H)$, when $2<k<r$ (assuming $r\ge 4$)?  We saw in Section \ref{chromat} that $\chi _w(H)=2$ when $H$ is minimally connected, but for every $n\ge r$, there exists a minimally connected $r$-uniform hypergraph $H$ such that $\chi_s(H)=n$.  For $r\ge 4$, what is the smallest value of $k$ for which $\chi _k$ is bounded for all minimally connected $r$-uniform hypergraphs?
\end{enumerate}

\bibliographystyle{amsplain}

\end{document}